\long\def\delete#1{}
\def\Ga{\Gamma}
\def\Om{\Omega}
\def\a{\alpha}
\def\b{\beta}
\def\d{\delta}
\def\s{\sigma}
\def\ve{\varepsilon}
\def\om{\omega}
\def\la{\langle}
\def\ra{\rangle}
\def\mod{{\rm mod~}}
\newcommand{\Z}{\mathbb{Z}}
\newcommand{\Aut}[1]{\mathrm{Aut}(#1)}
\newcommand{\Legendre}[2] {\left( \frac{#1}{#2} \right)}
\def\Cay{{\rm Cay}}
\def\diam{{\rm diam}}
\def\qed{\hfill$\Box$\vspace{12pt}}
\newcommand{\bea}{\begin{eqnarray}}
\newcommand{\eea}{\end{eqnarray}}
\def\non{\nonumber}
\newtheorem{thm}{Theorem}
\newtheorem{cor}[thm]{Corollary}
\newtheorem{lem}[thm]{Lemma}
\newtheorem{remark}{Remark}
\newtheorem{algorithm}{Algorithm}
\theoremstyle{definition}
\newtheorem{ex}{Example}
\title{Frobenius circulant graphs of valency six, Eisenstein-Jacobi networks, and hexagonal meshes}
\author{Alison Thomson and
Sanming Zhou\thanks{Email: smzhou@ms.unimelb.edu.au}\\ \\
Department of Mathematics and Statistics\\
The University of Melbourne\\
Parkville, VIC 3010, Australia}
\date{\today}
\begin{document}
\openup 0.5\jot 
\maketitle

\maketitle

\begin{abstract}
\small{
A Frobenius group is a transitive but not regular permutation group such that only the identity element can fix two points. A finite Frobenius group can be expressed as $G = K \rtimes H$ with $K$ a nilpotent normal subgroup. A first-kind $G$-Frobenius graph is a Cayley graph on $K$ with connection set $S$ an $H$-orbit on $K$ generating $K$, where $H$ is of even order or $S$ consists of involutions. We classify all 6-valent first-kind Frobenius circulant graphs such that the underlying kernel $K$ is cyclic. We give optimal gossiping and routing algorithms for such a circulant and compute its forwarding indices, Wiener indices and minimum gossip time. We also prove that its broadcasting time is equal to its diameter plus two or three. We prove that all 6-valent first-kind Frobenius circulants with cyclic kernels are Eisenstein-Jacobi graphs, the latter being Cayley graphs on quotient rings of the ring of Eisenstein-Jacobi integers.
We also prove that larger Eisenstein-Jacobi graphs can be constructed from smaller ones as topological covers, and a similar result holds for 6-valent first-kind Frobenius circulants. As a corollary any Eisenstein-Jacobi graph with order congruent to 1 modulo 6 and underlying Eisenstein-Jacobi integer not an associate of a real integer, is a cover of a 6-valent first-kind Frobenius circulant. A distributed real-time computing architecture known as HARTS or hexagonal mesh is a special 6-valent first-kind Frobenius circulant. 

{\bf Key words:} Frobenius graph; circulant graph; gossiping; routing; broadcasting; Wiener index; Eisenstein-Jacobi graph; HARTS; hexagonal mesh

{\bf AMS Subject Classification (2010):} 05C25, 68M10, 68R10, 90B18}
\end{abstract}

\section{Introduction}

\textsf{A. Introduction.}~~Searching for `good' graphs as models for interconnection networks is an ongoing endeavor in theoretical computer science and network design. It is generally believed that Cayley graphs are suitable network structures due to their many attractive properties (see e.g.~\cite{AK, ABR, CF, Hey, LJD}). In fact, a number of important network topologies \cite{Hey, LJD} such as rings, hypercubes, cube-connected graphs, multi-loop networks, butterfly graphs, Kn\"{o}del graphs \cite{FR}, etc. are Cayley graphs. 

Since the class of Cayley graphs is huge, one may naturally ask which Cayley graphs we should choose in order to achieve high performance. Of course, the answer to this question depends on how we measure the performance of a network, e.g. small diameter, small degree (valency), high connectivity, efficient data transmission, and so on. It has been proved that, as far as routing and gossiping are concerned, a large class of arc-transitive Cayley graphs, called first-kind Frobenius graphs, are `perfect' in the sense that they achieve the smallest possible forwarding indices \cite{FLP, S, Z} and gossiping time \cite{Z}, and possess several other attractive routing and gossiping properties \cite{Z}. (The reader is referred to \cite{BKP, FA, HMS, HKMP} and \S \ref{sec:rg}-\ref{sec:br} for definitions on routing, gossiping and broadcasting.) Because of this and the importance of circulant graphs in network design \cite{BCH, Hwang}, it would be desirable \cite{TZ} to classify all Frobenius circulant graphs and study their behaviours in information communication. In this paper we will classify all $6$-valent first-kind Frobenius circulants such that the kernels of the underlying Frobenius groups are cyclic, and study gossiping, routing and broadcasting in such graphs. We will also study the related family of Eisenstein-Jacobi graphs \cite{FB, MBG} and reveal their connections with 6-valent first-kind Frobenius circulants. 

The reader is referred to \cite{TZ} for a classification of $4$-valent first-kind Frobenius circulants and \cite{FZ} for a recent study on second-kind Frobenius graphs. 

\medskip
\textsf{B. Cayley graphs and circulants.}~~Given a group $K$ and an inverse-closed subset $S$ of $K \setminus \{1\}$ (where  $1$ is the identity element of $K$), the {\em Cayley graph} $\Cay(K, S)$ on $K$ with respect to the {\em connection set} $S$ is defined to have vertex set $K$ such that $x, y \in K$ are adjacent if and only if $xy^{-1} \in S$. A {\em complete rotation} \cite{BKP, Hey, HMP} of $\Cay(K, S)$ is an automorphism of $K$ which fixes $S$ setwise and induces a cyclic permutation on $S$; $\Cay(K, S)$ is {\em rotational} if it admits a complete rotation. 

A Cayley graph on a cyclic group is called a {\em circulant graph} or a {\em circulant}. In computer science, circulants are also called multi-loop networks \cite{BCH, Hwang}. Let $n \geq 7$ and $a,b,c$ be integers such that $1 \le a,b,c \le n-1$ and $a, b, c, n-a, n-b, n-c$ are pairwise distinct. \delete{(Replacing $a, b,c$ by $n-a,n-b,n-c$ when necessary, we may assume $1 \le a,b,c < n/2$.)} Then 
$$
TL_n(a,b,c) = \Cay(\Z_n, S),\;\,S = \{\pm [a], \pm [b], \pm [c]\}
$$ 
is a {\em 6-valent} circulant, that is, every vertex has degree 6. If $a'+b'+c' \equiv 0~\mod n$ for some $a' \in \{a, n-a\}, b' \in \{b, n-b\}, c' \in \{c, n-c\}$, then $TL_n(a,b,c)$ is called {\em geometric}. It is so called because in this case $TL_n(a,b,c)$ can be represented \cite{YFMA} by a plane tessellation of hexagons.  

In this paper we always assume that $TL_n(a,b,c)$ is connected (that is, $\gcd(a, b, c, n) = 1$) and so contains a Hamilton cycle. (In fact, any Cayley graph on an Abelian group with more than two vertices is Hamiltonian; see \cite{CG}.) By relabelling the vertices along a Hamilton cycle, we see that $TL_n(a,b,c)$ is isomorphic to some $TL(a', b', 1)$. Thus without loss of generality we may always assume $c=1$ in $TL_n(a,b,c)$.

\medskip 
\textsf{C. Transitive groups and Frobenius graphs.}~~A group $G$ is said to {\em act} on a set $\Om$ if every $(\a, g) \in \Om \times G$ corresponds to some $\a^g \in \Om$ such that $\a^1 = \a$ and $(\a^g)^{g'} = \a^{gg'}$, where $1$ is the identity element of $G$. The {\em stabiliser} of $\a \in \Om$ in $G$ is the subgroup $G_{\a} = \{g \in G: \a^g = \a\}$ of $G$. The {\em $G$-orbit} containing $\a$ is defined to be $\a^G = \{\a^g: g \in G\}$. If $G_{\a} = \{1\}$ for all $\a \in \Om$, then $G$ is called \cite{Dixon-Mortimer} {\em semiregular} on $\Om$. If $\a^G = \Om$ for some (and hence all) $\a \in \Om$, then $G$ is {\em transitive} on $\Om$. If $G$ is both transitive and semiregular on $\Om$, then it is said to be {\em regular} on $\Om$. 

A transitive group $G$ on $\Om$ is called a {\em Frobenius group} \cite{Dixon-Mortimer} if it is not regular but only the identity element can fix two points of $\Om$. It is well known (see e.g.~\cite[Section 3.4]{Dixon-Mortimer}) that a finite Frobenius group $G$ has a nilpotent normal subgroup $K$, called the {\em Frobenius kernel} of $G$, which is regular on $\Om$. Hence $G = K \rtimes H$ (semidirect product of $K$ by $H$), where $H$ is the stabiliser of a point of $\Om$ and is called a {\em Frobenius complement} of $K$ in $G$. Since $K$ is regular on $\Om$, we may identify $\Om$ with $K$ in such a way that $K$ acts on itself by right multiplication, and we choose $H$ to be the stabiliser of $1$ so that $H$ acts on $K$ by conjugation. For $x \in K$, let $x^H = \{h^{-1}xh: h \in H\}$. A {\em $G$-Frobenius graph} \cite{FLP} is a Cayley graph $\Ga=\Cay(K, S)$ on $K$, where for some $a \in K$ satisfying $\la a^H \ra = K$, $S = a^H$ if $|H|$ is even or $a$ is an involution, and $S = a^H \cup (a^{-1})^H$ otherwise. In these two cases, we call $\Ga$ a {\em first} or {\em second-kind} \cite{Z} Frobenius graph respectively. Since $\la a^H \ra = K$, $\Ga$ is connected in both cases.  

The reader is referred to \cite{Dixon-Mortimer} and \cite{IR, NZ} respectively for group-  and number-theoretic terminology used in this paper. 

\medskip
\textsf{D. Convention.}~~It is possible for a circulant graph to be a Cayley graph on two non-isomorphic groups \cite{M}. Thus it may happen that for a first-kind Frobenius circulant the underlying Frobenius kernel is not isomorphic to a cyclic group. {\em We will only consider first-kind Frobenius circulants with cyclic underlying Frobenius kernels, but for brevity we may not mention this condition explicitly.} 

\medskip
\textsf{E. Main results.}~~The following is a summary of our main results.
\begin{itemize}
\item In \S \ref{sec:frob} we will prove (Theorem \ref{existence}) that there exists a $6$-valent first-kind Frobenius circulant of order $n$ if and only if $n \equiv 1~\mod 6$ and the congruence equation $x^2 - x + 1 \equiv 0~\mod n$ is solvable. Moreover, under these conditions there are precisely $2^{l-1}$ pairwise non-isomorphic such circulants, all of which are arc-transitive and can be constructed from solutions to this congruence equation, where $l$ is the number of distinct prime factors of $n$. 

\item In \S \ref{sec:EJ} we will prove (Theorem \ref{thm:EJ}) that 6-valent first-kind Frobenius circulants are exactly Eisenstein-Jacobi graphs (EJ graphs for short) $EJ_{a+b\rho}$ with $\gcd(a, b) = 1$ whose order is congruent to 1 modulo 6, where $EJ_{a+b\rho}$ is defined \cite{FB, MBG} (see \S \ref{sec:EJ}) to be the Cayley graph on the additive group of $\Z[\rho]/(a+b\rho)$ with respect to the connection set $\{\pm 1/(a+b\rho), \pm \rho/(a+b\rho), \pm \rho^2/(a+b\rho)\}$, where $\rho = (1+\sqrt{-3})/2$ and $\Z[\rho]$ is the ring of Eisenstein-Jacobi integers. We will also prove (Theorem \ref{thm:arc tran}) that all EJ graphs are arc-transitive.

\item We will further prove (Corollary \ref{cor:cover}) in \S \ref{sec:cover} that any EJ graph $EJ_{a+b\rho}$ with order congruent to 1 modulo $6$ and $a+b\rho$ not an associate of any real integer,  is a topological cover of a 6-valent first-kind Frobenius circulant. In fact, we will prove a more general result (Theorem \ref{thm:cover}) which asserts that larger EJ graphs can be constructed from and are covers of smaller EJ graphs. We will also prove a similar result (Theorem \ref{thm:quo}) for the family of 6-valent first-kind Frobenius circulants.

\item The importance of the subfamily of 6-valent first-kind Frobenius circulants lies in that they possess very attractive routing, gossiping and broadcasting properties which are not known to hold for other EJ graphs. These will be discussed in \S \ref{sec:rg} and \S \ref{sec:br}. In \S \ref{sec:rg} we will give optimal gossiping and routing algorithms and compute the forwarding indices and minimum gossip time for any $6$-valent first-kind Frobenius circulant (Corollary \ref{cor:rt-gs}). A by-product is a formula for the Wiener index of any $6$-valent first-kind Frobenius circulant.

\item In \S \ref{sec:br} we will prove that the broadcasting time of any 6-valent  first-kind Frobenius circulant is equal to its diameter plus 2 or 3 (Theorem \ref{thm:broad}), indicating that it is efficient for broadcasting as well.  
\end{itemize}

\textsf{F. Remarks.}~~Three research groups in three different areas came up with the same or related families of graphs independently. The motivation of the present paper, as well as that of \cite{Z} and \cite{TZ}, is to construct Cayley graphs that enable very efficient information transmission. Motivated by construction of perfect codes, in \cite{MBG} Martinez, Beivide and Gabidulin introduced EJ networks; and in \cite{FB} Flahive and Bose further studied EJ networks and related Gaussian networks \cite{MBSMG}. As mentioned above, 6-valent first-kind Frobenius circulants are precisely EJ graphs $EJ_{a+b\rho}$ with $\gcd(a, b) = 1$ and order congruent to 1 modulo 6. This connection enables us to compute distance distributions of 6-valent first-kind Frobenius circulants by using the corresponding results \cite{FB} for EJ graphs. On the other hand, any $EJ_{a+b\rho}$ with order congruent to 1 modulo $6$ and $a+b\rho$ not an associate of any real integer can be constructed from a 6-valent first-kind Frobenius circulant as a topological cover. This allows us to compute the forwarding indices and the minimum gossip times of such EJ graphs by using the general theory developed in \cite{Z}.

The first version of this paper was made public in May 2012 (see \url{http://arxiv.org/pdf/1205.5877v1.pdf}). It was only recently that we found that a very special subfamily of 6-valent first-kind Frobenius circulants had physically been used \cite{CSK, DRS, Shin} as multiprocessor interconnection networks at the Real-Time Computing Laboratory, The University of Michigan. They are called HARTS ({\em Hexagonal Architecture for Real-Time Systems}) \cite{DRS, Shin}, {\em C-wrapped hexagonal meshes} \cite{DRS}, or {\em hexagonal mesh interconnection networks} \cite{ABF}. As we will see in Example \ref{optimal}, they are indeed 6-valent first-kind Frobenius circulants. In the combinatorial community they were first studied in \cite{YFMA}, and their optimal routing and gossiping algorithms were given in \cite{TZ1}.

We remark that the routing problem considered in this paper is different from that studied in \cite{ABF, CSK, FB, MBSMG}, where routing is mainly about computing shortest paths and distance between two vertices. An optimal one-to-all communication (broadcasting) algorithm for HARTS $H_k$ of diameter $k-1$ was given in \cite[Algorithm A2]{CSK}, using $k+2$ steps when $k \ge 3$. Since $H_k$ is a special 6-valent first-kind Frobenius circulant, Theorem \ref{thm:broad} in the present paper can be viewed as a generalization of this result to a much larger family of graphs. In \cite{ABF} an algorithm for all-to-all communication (that is, gossiping in the present paper and \cite{TZ1}) for $H_k$ was given which requires $3k(k-1)/2$ time steps. However, by \cite[Theorem 4]{TZ1}, $k(k-1)/2$ time steps are sufficient and necessary for $H_k$, and an algorithm using $k(k-1)/2$ steps was given in \cite[Algorithm 2]{TZ1}. Yet this is a special case of a more general result: In Algorithm \ref{alg:go} we will give an optimal all-to-all communication algorithm for any 6-valent first-kind Frobenius circulant of order $n$ using $(n-1)/6$ time steps.

\section{Classification of 6-valent  first-kind Frobenius circulants}
\label{sec:frob}

\textsf{A. Preparations.}~~We always use $[m]$ to denote the residue class modulo $n$, where $n$ is a positive integer. Let $\Z_n^* = \{[m]: 1 \le m \le n-1, \gcd(m,n) = 1\}$ be the multiplicative group of units of ring $\Z_n$. We use $[m]^{-1}$ to denote the inverse element of $[m]$ in $\Z^*_{n}$.  It is well known that $\Aut{\Z_n} \cong \Z_n^*$. As the automorphism group of $\Z_n$, $\Z^*_{n}$ acts on $\Z_{n}$ by usual multiplication: $[x][m] = [xm]$, $[m] \in \Z^*_{n}$, $[x] \in \Z_{n}$. $\Z_{n} \rtimes \Z^*_{n}$ acts on $\Z_{n}$ such that $[x]^{([y], [m])} = [(x+y)m]$ for $[x], [y] \in \Z_{n}$ and $[m] \in \Z^*_{n}$. 

\delete
{
The operation of $\Z_{n} \rtimes \Z^*_{n}$ is defined by $([x_1], [m_1])([x_2], [m_2]) = ([x_1]+[x_2][m_1]^{-1}, [m_1 m_2])$ for $([x_1], [m_1]), ([x_2], [m_2]) \in \Z_{n} \rtimes \Z^*_{n}$. Thus the inverse element of $([x], [m])$ in $\Z_{n} \rtimes \Z^*_{n}$ is $(-[xm], [m]^{-1})$.
}

\begin{lem} 
\label{semiregular}
(\cite[Lemma 4]{TZ})
A subgroup $H$ of $\Z_n^*$ is semiregular on $\Z_n \setminus \{[0]\}$ if and only if $[h-1] \in \Z_n^*$ for all $[h] \in H \setminus \{[1]\}$.
\end{lem}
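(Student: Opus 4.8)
The plan is to unwind the definition of semiregularity into a statement about fixed points of individual group elements, and then to translate the fixed-point condition into an elementary congruence that can be analysed by a $\gcd$ argument.

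First I would recall that, since $H$ acts on $\Z_n \setminus \{[0]\}$ by multiplication, $H$ is semiregular on this set precisely when the stabiliser $H_{[x]}$ is trivial for every $[x] \in \Z_n \setminus \{[0]\}$; equivalently, no element $[h] \in H \setminus \{[1]\}$ fixes any point of $\Z_n \setminus \{[0]\}$. Thus it suffices to show, for a fixed $[h] \in \Z_n^*$, that $[h]$ fixes some nonzero point of $\Z_n$ if and only if $[h-1] \notin \Z_n^*$.

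Next I would rewrite the fixed-point equation $[x][h] = [x]$ as $[x(h-1)] = [0]$, i.e.\ $x(h-1) \equiv 0 \pmod n$, and determine when it admits a solution $[x] \ne [0]$. Writing $d = \gcd(h-1, n)$: if $d > 1$ then $[x] = [n/d]$ is a nonzero solution, since $n \mid (n/d)(h-1)$; conversely, if $d = 1$ then $[h-1] \in \Z_n^*$, so multiplying $[x(h-1)] = [0]$ by $[h-1]^{-1}$ forces $[x] = [0]$. Hence $[h]$ fixes a nonzero point if and only if $d > 1$, i.e.\ if and only if $[h-1] \notin \Z_n^*$.

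Finally I would assemble the equivalence: $H$ is semiregular on $\Z_n \setminus \{[0]\}$ $\iff$ every $[h] \in H \setminus \{[1]\}$ fixes no nonzero point of $\Z_n$ $\iff$ $[h-1] \in \Z_n^*$ for every $[h] \in H \setminus \{[1]\}$. I do not anticipate a genuine obstacle here; the only points needing a little care are keeping the two quantifiers (over points and over group elements) straight and justifying the zero-divisor characterisation of $[x(h-1)] = [0]$, both of which are routine.
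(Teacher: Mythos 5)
Your argument is correct and complete: reducing semiregularity to the non-existence of a nonzero solution of $[x(h-1)]=[0]$ and settling that via $d=\gcd(h-1,n)$ is exactly the standard proof. Note that the paper itself gives no proof of this lemma --- it is quoted from \cite[Lemma 4]{TZ} --- and your argument is the same elementary one used there, so there is nothing further to compare.
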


For an odd prime $p$ and an integer $r$, the Legendre symbol $\Legendre{r}{p}$ is defined \cite{IR, NZ} to be 1 if $r$ is a quadratic residue modulo $p$, $-1$ if $r$ is a quadratic nonresidue modulo $p$, and $0$ if $p$ divides $r$. 

An {\em arc} of a graph is an ordered pair of adjacent vertices. A graph $\Ga$ is {\em $G$-arc-transitive} if $G \le \Aut{\Ga}$ is transitive on the set of arcs of $\Ga$, where $\Aut{\Ga}$ is the automorphism group of $\Ga$. 

A group $K$ is a {\em connected $m$-CI-group} \cite{Li} if for any connected Cayley graphs $\Cay(K, S)$ and $\Cay(K, T)$ such that $|S| = |T| \le m$ and $\Cay(K, S) \cong \Cay(K, T)$, there exists $\s \in \Aut K$ such that $T = S^{\s}$. 

\medskip
\textsf{B. Classification.}~~The following is the main result in this section.
\begin{thm}
\label{existence}
Let $n \ge 7$ be an integer. The following statements are equivalent:
\begin{itemize}
\item[\rm (a)] there exists a 6-valent first-kind Frobenius circulant $TL_n(a,b,1)$ of order $n$ such that the kernel of the underlying Frobenius group is cyclic;
\item[\rm (b)] $n \equiv 1~\mod~6$ and the following congruence equation has a solution:
\begin{equation}
\label{eq:3rd}
x^2 - x + 1 \equiv 0~~\mod n.
\end{equation} 
\end{itemize}
Moreover, if one of these conditions is satisfied, then 
\begin{itemize}
\item[\rm (c)] each prime factor of $n$ is congruent to 1 modulo 6;
\item[\rm (d)] each solution $a$ to (\ref{eq:3rd}) gives rise to a 6-valent first-kind Frobenius circulant $TL_n(a,b,1)$, and vice versa; in this case we have $b \equiv a-1~\mod n$ and $TL_n(a,a-1,1)$ is a rotational, geometric, $\Z_n \rtimes H$-arc-transitive and first-kind $\Z_n \rtimes H$-Frobenius graph admitting $[a]$ and $-[a^2]$ as complete rotations, where 
\begin{equation}
\label{eq:H}
H = \la [a] \ra = \{\pm [1], \pm [a], \pm [a^2]\} = \{\pm [1], \pm [a], \pm [a-1]\} 
\le \Z_n^*;
\end{equation}
\item[\rm (e)] there are exactly $2^{{l-1}}$ pairwise non-isomorphic 6-valent first-kind Frobenius circulants of order $n$, where $l$ is the number of distinct prime factors of $n$, and each of them is isomorphic to $TL_n(a,a-1,1)$ for some $a$ as above.
\end{itemize}
\end{thm}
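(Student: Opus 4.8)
The plan is to prove the chain of equivalences and the structural statements by translating the Frobenius condition into arithmetic via Lemma~\ref{semiregular}. First I would set up the necessary condition. Suppose $TL_n(a,b,1)=\Cay(\Z_n,S)$ with $S=\{\pm[a],\pm[b],\pm[1]\}$ is a first-kind $\Z_n\rtimes H$-Frobenius graph, where $H\le\Z_n^*$ acts by multiplication. By definition $S$ is a single $H$-orbit (since $|H|$ is even, $-1\in H$) generating $\Z_n$, so $H$ acts transitively on the six elements of $S$; as $S$ is a union of $\{\pm\}$-pairs and $H\ni[-1]$, we get $|H|=6$ and $H$ acts regularly on $S$. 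Pick the element $[a']\in H$ sending $[1]\mapsto[a]$; then $H=\la[a']\ra$ is cyclic of order $6$. Applying $H$ to $[1]$ lists $S$ as $\{[1],[a'],[a'^2],[a'^3],[a'^4],[a'^5]\}$, and since $[a'^3]=[-1]$ this is $\{\pm[1],\pm[a'],\pm[a'^2]\}$. Matching with $S$ forces $\{[b]\}=\{\pm[a'^2]\}$ up to sign, and more importantly $[a'^6]=[1]$ with $[a'^3]=[-1]$ gives $[a'^2]-[a']+[1]=0$ in $\Z_n$ once we use that the minimal polynomial of a primitive sixth root of unity is $x^2-x+1$ (concretely, $a'^3=-1$ means $(a'+1)(a'^2-a'+1)\equiv0$, and $a'+1$ is a unit because $a'\in H\setminus\{[1]\}$ must satisfy $[a'-1]\in\Z_n^*$... more precisely one checks $a'^2-a'+1\equiv0$ directly from $a'$ having order exactly $6$). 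So $a:=a'$ solves \eqref{eq:3rd}, and $b\equiv a^2\equiv a-1$. This also shows $[a]$ is a complete rotation and $-[a^2]=[a^4]$ another; rotationality is then immediate, and geometricity follows since $a+(a-1)+1\equiv 2a\equiv$... — here one should be careful: geometricity needs $a'+b'+c'\equiv0$ for some choice of signs, and indeed $[a]-[a^2]+[1]=[a]-[a-1]+[1]=[2]$ is not it, but $-[a]+[a-1]-[1]\cdot$... the correct identity is $[1]+[a-1]-[a]\equiv 0$? No: $1+(a-1)-a=0$ — yes, so $TL_n(a,a-1,1)$ is geometric with $c'=1$, $b'=b=a-1$, $a'=-a$. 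Finally $n\equiv1\pmod 6$: since $\Z_n^*$ has an element of order $6$, $6\mid\varphi(n)$, and a short argument using that $H\setminus\{1\}$ consists of units $h$ with $[h-1]\in\Z_n^*$ (Lemma~\ref{semiregular}) rules out $2\mid n$ and $3\mid n$ — if $2\mid n$ then $[h-1]$ is even hence not a unit, contradiction; if $3\mid n$ then $[a-1]\cdot[a]\cdot$ the relation $a^2-a+1\equiv0\pmod 3$ has no solution. Hence $\gcd(n,6)=1$, and combined with $n\equiv1\pmod 6$ follows.

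Second, for the converse (b)$\Rightarrow$(a): given $n\equiv1\pmod6$ and a solution $a$ to \eqref{eq:3rd}, put $H=\la[a]\ra$. I would verify that $H=\{\pm[1],\pm[a],\pm[a-1]\}$ has order $6$ (the relation $a^2-a+1\equiv0$ gives $a^3\equiv-1$, so $[a]$ has order dividing $6$; it is not $1,2,3$ because e.g.\ $a\equiv1$ forces $1\equiv0\pmod n$, etc.), that $H$ is semiregular on $\Z_n\setminus\{[0]\}$ by checking $[h-1]\in\Z_n^*$ for each $h\in H\setminus\{[1]\}$ via Lemma~\ref{semiregular} — the six differences are $[a-1],[a-1]\cdot$(units),$[-2],\dots$ and each is coprime to $n$ precisely because every prime factor of $n$ is $\equiv1\pmod6$ (claim (c), which I prove here en route: a prime $p\mid n$ must admit a solution to $x^2-x+1\equiv0\pmod p$, whence $-3$ is a QR mod $p$, forcing $p\equiv1\pmod3$, and $p$ odd gives $p\equiv1\pmod6$). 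Semiregularity of $H$ plus transitivity of $\Z_n$ makes $\Z_n\rtimes H$ a Frobenius group with cyclic kernel $\Z_n$; taking $S=[1]^H$ gives the first-kind Frobenius graph $\Cay(\Z_n,S)=TL_n(a,a-1,1)$, which is arc-transitive because $\Z_n\rtimes H$ acts transitively on arcs (the stabiliser $H$ of $[0]$ is transitive on $S$). This simultaneously establishes (c) and (d).

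Third, for the counting statement (e): the solutions to $x^2-x+1\equiv0\pmod n$ are in bijection, via the Chinese Remainder Theorem, with tuples of solutions modulo each prime power $p_i^{e_i}\|n$; each such factor has exactly $2$ solutions (Hensel lifting the two distinct roots mod $p_i$, distinct because the discriminant $-3$ is a unit mod $p_i$ since $p_i\neq3$), so there are $2^l$ solutions in all. Next I would show two solutions $a,a'$ give isomorphic circulants $TL_n(a,a-1,1)\cong TL_n(a',a'-1,1)$ if and only if the connection sets are related by an element of $\Z_n^*=\Aut{\Z_n}$ — this uses that $\Z_n$ is a connected $6$-CI-group (the relevant CI-property for valency $\le 6$, which should be cited or is standard for cyclic groups in this range), so graph isomorphism forces $S'=S^\sigma$ for some $\sigma\in\Z_n^*$; since $S=H$ as a set and $\sigma$ normalises nothing in particular, $S^\sigma=H^\sigma$, and $H^\sigma=H'$ (the group generated by $[a']$) happens iff ... one computes that multiplying $a$ by a unit $\sigma$ sends the root $a$ to the root $\sigma a$ of $\sigma^2x^2-\sigma x+1$, and comparing, the pair $\{a, a-1=a^2\}$ up to the action of $\Z_n^*$ and the inversion $a\mapsto a^{-1}\equiv 1-a$ (note $a$ and $1-a\equiv -a^2$ are the two roots, and $1-a\equiv a^{-1}$... actually the other root is $1-a$, and $1-a\equiv -(a^2-a)\equiv\dots$) — the upshot is that the $\Z_n^*$-orbits on the set of $2^l$ solutions, where we also identify $a\sim 1-a$ within each prime-power coordinate, number exactly $2^{l-1}$. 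I expect this last bookkeeping to be the main obstacle: carefully identifying which solutions yield isomorphic graphs requires pinning down the $\Aut{\Z_n}$-action on solutions of \eqref{eq:3rd} together with the $a\leftrightarrow 1-a$ symmetry coming from $S=S^{-1}$, and then a clean CRT-orbit count to land on $2^{l-1}$ rather than $2^l$ or $2^l/2$ with an erroneous global identification. The remaining items — that each resulting graph is rotational, geometric, and $\Z_n\rtimes H$-arc-transitive with the stated complete rotations — are then direct verifications from the explicit description of $H$ and $S$ in \eqref{eq:H}.
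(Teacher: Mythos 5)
Your overall strategy mirrors the paper's: translate the Frobenius condition into arithmetic via Lemma~\ref{semiregular}, count solutions of $x^2-x+1\equiv 0\pmod n$ by CRT and Hensel lifting, and invoke the CI property of $\Z_n$ to decide isomorphism. However, three steps as written do not hold up. First, your fallback claim that $a'^2-a'+1\equiv 0\pmod n$ follows ``directly from $a'$ having order exactly $6$'' is false: $[12]$ has order $6$ in $\Z_{91}^*$ yet $12^2-12+1=133\not\equiv 0\pmod{91}$ (this is the paper's own Remark~1(e)). The quadratic relation genuinely requires $\gcd(a'+1,n)=1$, which you must obtain by applying Lemma~\ref{semiregular} to the element $-a'\in H$ (giving $[-a'-1]\in\Z_n^*$); your justification cites $[a'-1]\in\Z_n^*$, which is the wrong element and does not yield the needed coprimality. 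Second, your derivation of $n\equiv 1\pmod 6$ is broken: $6\mid\varphi(n)$ together with $\gcd(n,6)=1$ does not force $n\equiv 1\pmod 6$ (take $n=35$). The correct one-line argument, which the paper uses, is that the semiregular group $H$ of order $6$ partitions $\Z_n\setminus\{[0]\}$ into orbits of size exactly $6$, so $6\mid n-1$.

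Third, the count of $2^{l-1}$ isomorphism classes is precisely the part you leave unresolved, and it is where the real content of (e) lies. The missing observation is that the connection set $S=\{\pm[1],\pm[a],\pm[a^2]\}$ is a subgroup of $\Z_n^*$; hence once the CI property yields $S_1[m]=S_2$ for some unit $[m]$, the fact that $[1]\in S_2$ forces $[m]\in S_1$ (since $[xm]=[1]$ for some $[x]\in S_1$), and therefore $S_2=S_1$. Combined with the computation that exactly two of the six elements of $S$, namely $[a]$ and $-[a^2]\equiv[1-a]$, are solutions of the congruence, the $2^l$ solutions pair off into $2^{l-1}$ distinct connection sets, i.e.\ $2^{l-1}$ isomorphism classes; without pinning down both facts the count could come out as $2^l$ or could be miscounted by a spurious global identification, exactly the worry you raise. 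Finally, in the converse direction your assertion that all required differences are coprime to $n$ ``because every prime factor of $n$ is $\equiv 1\pmod 6$'' is not a proof; the quickest repair is substitution: if $p\mid a+1$ then $a\equiv -1\pmod p$ and the congruence gives $p\mid 3$, contradicting $p\ge 5$, and similarly for $a$, $a-1$ and $a-2$, while $\gcd(2,n)=1$ since $n$ is odd.
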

\begin{proof}
(i)~~Suppose first that there exists a first-kind Frobenius circulant $TL_n(a,b,1)$ of order $n$ such that the kernel of the underlying Frobenius group is cyclic. Then there exists a subgroup $H$ of $\Z_n^*$ such that $|H| = 6$, $\Z_n \rtimes H$ is a Frobenius group and $TL_n(a,b,1)$ is a first-kind $\Z_n \rtimes H$-Frobenius circulant. Thus $H$ is semiregular on $\Z_n \setminus \{[0]\}$ and so $n \equiv 1~\mod 6$. Moreover, $S = \{\pm [1], \pm [a], \pm [b]\}$ is an $H$-orbit on $\Z_n$ and hence $H$ is regular on $S$. Since $[1] \in S$, it follows that $S = H$. Since $H$ is Abelian with $|H| = 6$, it must be a cyclic group of order 6. So we may assume $H = \la [h] \ra = \{[1], [h], [h^2], [h^3], [h^4], [h^5]\}$ for an element $[h]$ of $\Z_n^*$ with order 6. Since $S = H$, there exists $1 \le i \le 5$ such that $[h^i] = -[1]$. Hence $[h^{2i}] = [1]$ and so $6$ divides $2i$. Therefore, $i = 3$, $[h^3] = -[1]$ and $H = \{\pm [1], \pm [h], \pm [h^2]\}$. Since $S=H$, without loss of generality we may assume $[a] = [h]$, so that $a^3 + 1 \equiv 0~\mod n$, $b \equiv a^2~\mod n$ and $H = \{\pm [1], \pm [a], \pm [a^2]\}$. Since $H$ is semiregular on $\Z_n \setminus \{[0]\}$, by Lemma \ref{semiregular} we have $[-a-1] \in \Z_n^*$ and hence $\gcd(a+1, n) = 1$. Since $a^3+1 = (a+1)(a^2-a+1) \equiv 0~\mod n$, $a$ is a solution to (\ref{eq:3rd}). Hence $b \equiv a^2 \equiv a-1~\mod n$ and $TL_n(a,b,1) = TL_n(a,a-1,1)= TL_{n}(n-a, a-1, 1)$ is geometric. It is readily seen that $[a]$ and $-[a^2]$ are complete rotations of $TL_n(a,a-1,1)$. Hence $TL_n(a,a-1,1)$ is rotational as well. Moreover, $TL_n(a, a-1, 1)$ is $\Z_n \rtimes H$-arc-transitive by \cite[Lemma 2.1]{Z}.

(ii)~~Now suppose $n \equiv 1~\mod 6$ and (\ref{eq:3rd}) is solvable. Write $n = p_1^{e_1}p_2^{e_2} \ldots p_l^{e_l}$, where $p_1, p_2, \ldots, p_l \ge 5$ are distinct primes and $e_1, e_2, \ldots, e_l \ge 1$ are integers.  
Let $a$ be a solution to (\ref{eq:3rd}). We first prove $\gcd(a+1, n) = 1$. Suppose otherwise. Then $\gcd(2a-1, n) > 1$ as $a(a+1) \equiv 2a-1~\mod n$ by (\ref{eq:3rd}). Without loss of generality we may assume that $p_1$ divides $\gcd(2a-1, n)$. Since $a$ is a solution to (\ref{eq:3rd}), by \cite[Section 2.5]{NZ} it  is also a solution to $x^2 - x +1 \equiv 0~\mod p_1^{e_1}$. Since $n$ is odd, it follows that $a$ is a solution to $4x^2 - 4x +4 \equiv 0~\mod p_1^{e_1}$, that is, $(2x-1)^2 \equiv -3~\mod p_1^{e_1}$. Moreover, $a = (p_1^{e_1} + 1)(v+1)/2~\mod p_1^{e_1}$ for some integer $v$ satisfying $v^2 \equiv -3~\mod p_1^{e_1}$. Thus $2a-1 \equiv v~\mod p_1^{e_1}$ and so $p_1$ divides $v$. Since $p_1$ divides $v^2+3$, it follows that $p_1$ divides $3$, which contradicts the fact $p_1 \ge 5$. Therefore, any solution $a$ to (\ref{eq:3rd}) satisfies $\gcd(a+1, n) = 1$. 

Since $a$ satisfies (\ref{eq:3rd}), it also satisfies $a^3 + 1 \equiv 0~\mod n$. Hence $\gcd(a, n) = 1$ and $H := \la [a] \ra = \{\pm [1], \pm [a], \pm [a^2]\} \le \Z_n^*$. Since $a^3 \equiv -1~\mod n$, $a^6 \equiv 1~\mod n$ and so the order of $[a]$ in $\Z_n^*$ is a divisor of $6$. Obviously, $a^3 \not \equiv 1~\mod n$. If $a^2 \equiv 1~\mod n$, then since $\gcd(a+1, n) = 1$ we have $a \equiv 1~\mod n$, which is a contradiction. Thus $[a]$ must have order 6 in $\Z_n^*$ and hence $|H| = 6$. Since $a$ satisfies (\ref{eq:3rd}), it follows that $\gcd(a^2+1, n) = \gcd(a, n) = 1$ and $\gcd(a-1, n) = \gcd(a^2, n) = 1$. Since $\gcd(a+1, n) = 1$, we have $\gcd(a^2-1, n) = 1$. Therefore, $H$ is semiregular on $\Z_n \setminus \{[0]\}$ by Lemma \ref{semiregular}. Hence $\Z_n \rtimes H$ is a Frobenius group. Set $S := H$. Then $S$ is an $H$-orbit on $\Z_n \setminus \{[0]\}$ and $\la S \ra = \Z_n$ since $[1] \in S$. Hence $\Cay(\Z_n, S)=TL_n(a, b, 1)$ is a 6-valent first-kind $\Z_n \rtimes H$-Frobenius graph, where $b \equiv a^2 \equiv a - 1~\mod n$ and obviously the kernel of $\Z_n \rtimes H$ is cyclic.  

(iii)~~Suppose $n \equiv 1~\mod 6$ and (\ref{eq:3rd}) is solvable. Then all statements in (d) are true by the proof above. Moreover, by \cite[Section 2.5]{NZ}, $a$ is a solution to (\ref{eq:3rd}) if and only if it is a solution to the set of congruence equations:
\begin{equation}
\label{eq:set}
x^2 - x +1 \equiv 0~~\mod p_i^{e_i},\; i = 1, 2, \ldots, l.
\end{equation}
Since $4$ is co-prime to $p_i$, we have: $a$ is a solution to (\ref{eq:3rd}) $\Leftrightarrow$ $a$ is a solution to $4x^2 - 4x +4 \equiv 0~\mod p_i^{e_i}$, i.e. $(2x-1)^2 \equiv -3~\mod p_i^{e_i}$ for $i = 1, 2, \ldots, l$ $\Leftrightarrow$ $a$ is a solution to $2x-1 \equiv v~\mod p_i^{e_i}$, for $i = 1, 2, \ldots, l$, where $v$ is a solution to $x^2 \equiv -3~\mod p_i^{e_i}$ for each $i$. Since $p_i$ is odd, for any integer $v$, $2x-1 \equiv v~\mod p_i^{e_i}$ has a unique solution, namely $(p_i^{e_i} + 1)(v+1)/2~\mod p_i^{e_i}$. Since $\gcd((p_i^{e_i} +1)/2, p_i^{e_i}) = \gcd(2, p_i^{e_i}) = 1$, the solutions of $x^2 - x +1 \equiv 0~\mod p_i^{e_i}$ and that of $x^2 \equiv -3~\mod p_i^{e_i}$ are in one-to-one correspondence. 

Since (\ref{eq:3rd}) is solvable by our assumption, from the argument above $x^2 \equiv -3~\mod p_i^{e_i}$ is solvable for each $i$. By \cite[Proposition 4.2.3]{IR} and the fact $p_i \ge 5$, this implies that $x^2 \equiv -3~\mod p_i$ is solvable, that is, $\Legendre{-3}{p_i} = 1$ for each $i$. Since $\Legendre{-3}{p_i}=(-1)^{\frac{p_i-1}{2}}\Legendre{3}{p_i}$, it follows that either $p_i \equiv 1~\mod 4$ and $\Legendre{3}{p_i} = 1$ or $p_i \equiv -1~\mod 4$ and $\Legendre{3}{p_i} = -1$. By \cite[Theorem 2, Chapter 5]{IR}, $\Legendre{3}{p_i} = 1$ if and only if $p_i \equiv \pm d^2~\mod 12$, where $d$ is an odd integer co-prime to $3$. Thus, $\Legendre{3}{p_i} = 1$ if and only if $p_i \equiv \pm 1~\mod 12$, and $\Legendre{3}{p_i} = -1$ if and only if $p_i \equiv \pm 5~\mod 12$. Therefore, $p_i \equiv 1~\mod 6$ for $i = 1, 2, \ldots, l$ and (c) holds.  

Let $N(n)$ denote the number of solutions to (\ref{eq:3rd}) and $\hat{N}(n)$ the number of solutions to $x^2 \equiv -3$~mod $n$. Then $N(n) = \prod_{i=1}^{l}N(p_i^{e_i})$ and $\hat{N}(n) = \prod_{i=1}^{l}\hat{N}(p_i^{e_i})$ by \cite[Theorem 2.18]{NZ}. Note that $N(p_i^{e_i}) = \hat{N}(p_i^{e_i})$ by our discussion above. Hence $N(n) = \hat{N}(n)$. Since $p_i \ge 5$ is not a divisor of 2 or $-3$, by \cite[Proposition 4.2.3]{IR} we have $\hat{N}(p_i^{e_i}) = N(p_i)$ for each $i$. Since $p_i \equiv 1~\mod 6$ as proved above, we have $\hat{N}(p_i) \ge 1$ as shown earlier. Thus, by \cite[Corollary 2.28]{NZ}, $\hat{N}(p_i) = 2$ and hence $N(n) = \hat{N}(n) = \prod_{i=1}^l \hat{N}(p_i^{e_i}) = 2^l$. For each solution $a$ to (\ref{eq:3rd}), we have $(-a)^2 - (-a)+1 \equiv 2a \not \equiv 0~\mod n$, $(a^2)^2 - a^2+1 \equiv -a^2 - a +1 \equiv -2(a-1) \not \equiv 0~\mod n$ and $(-a^2)^2 - (-a^2)+1 \equiv a^2 - a +1 \equiv 0~\mod n$. Hence $-a^2$ is also a solution to (\ref{eq:3rd}) and moreover no residue class in $S = \{\pm [1], \pm [a], \pm [a^2]\}$ other than $[a]$ and $-[a^2]$ is a solution to (\ref{eq:3rd}). It is proved in \cite[Theorem 4.2]{Li} that any Abelian group $G$ is a connected $p$-CI group, where $p$ is the least prime factor of $|G|$. Applying this to $\Z_n$ and noting that all $p_i \ge 7$, it follows that $\Z_n$ is a connected 7-CI group. Therefore, if $TL_n(a_1, b_1, 1) \cong TL_n(a_2, b_2, 1)$ for two solutions $a_1, a_2$ to (\ref{eq:3rd}) (where $b_1 \equiv a_1^2, b_2 \equiv a_2^2~\mod n$), then there exists $[m] \in \Z_n^*$ such that $S_1 [m] = S_2$, where $S_1 = \{\pm [1], \pm [a_1], \pm [a_1^2]\}$ and $S_2 = \{\pm [1], \pm [a_2], \pm [a_2^2]\}$. Since $[1] \in S_2$, there exists $[x] \in S_1$ such that $[xm] = [1]$. Since $S_1$ is a subgroup of $\Z_n^*$, this implies $[m] \in S_1$ and consequently $S_2 = S_1 [m] = S_1$. Note that the solutions $a$ and $-a^2$ to (\ref{eq:3rd}) give rise to the same graph. Therefore, there are exactly $2^{l-1}$ pairwise non-isomorphic 6-valent first-kind Frobenius circulants of order $n$. 
\qed
\end{proof} 

\medskip
\textsf{C. Convention and remarks.}~~Whenever we mention a 6-valent first-kind Frobenius circulant $TL_n(a,a-1,1)$ we assume without mentioning that it is as in Theorem \ref{existence} so that the kernel of the underlying Frobenius group is isomorphic to $\Z_n$.

\begin{remark} 
\label{rem:prac}
{\em (a) Note that the connection set of $TL_n(a,a-1,1)$ in Theorem \ref{existence} is the same as the complement $H$ of the underlying Frobenius group. 

(b) Since $\gcd(a, n) = \gcd(a-1, n) = \gcd(1, n) = 1$, $TL_n(a,a-1,1)$ can be decomposed into three edge-disjoint Hamilton cycles. Note that we also have $\gcd(a+1, n) = 1$ and so $\gcd(a-2, n) = 1$ as $a-2 \equiv (a-1)(a+1)~\mod n$. One can verify that $\gcd(2a-1, n) = 1$. These observations will be used in \S \ref{sec:cover}. 

(c) By Theorem \ref{existence}, a necessary condition for the existence of a 6-valent first-kind Frobenius circulant of order $n$ is $\phi(n) \equiv 0~\mod 6$, where $\phi(n)$ is Euler's totient function. 

(d) Given solutions to $x^2 \equiv -3~\mod p$ as input, there exist efficient algorithms to compute solutions to $x^2 \equiv -3~\mod p^e$ for any prime $p$ and integer $e \ge 1$. Once we work out solutions to $x^2 \equiv -3~\mod p_i^{e_i}$ for all $i$, we can find all solutions $v$ to $x^2 \equiv -3~\mod n$ by using a standard procedure (see e.g.~\cite[Section 2.5]{NZ}) based on the Chinese Remainder Theorem. 

Using the argument after (\ref{eq:set}) and noting that $n$ is odd, we see that $a$ is a solution to (\ref{eq:3rd}) if and only if $a \equiv (n+1)(v+1)/2~\mod n$ for a solution $v$ to $x^2 \equiv -3~\mod n$. So we obtain the following solution $a$ to (\ref{eq:3rd}) from $v$ (the other solution from $v$ is $-a^2 \equiv n-a+1~\mod n$):
\begin{equation}\label{eq:a}
a \equiv \left\{ 
\begin{array}{ll}
\frac{v+1}{2},\; & \mbox{if $v$ is odd} \\ [0.3cm]
\frac{n+v+1}{2},\; & \mbox{if $v$ is even.} 
\end{array}
\right.
\end{equation}
In this way we can find all solutions $a$ to (\ref{eq:3rd}) and hence all 6-valent first-kind Frobenius circulants $TL_n(a,a-1,1)$ of order $n$. Example \ref{optimal} below and Example \ref{ex:6g} in the next section show that both cases in (\ref{eq:a}) can occur. 

(e) A solution to $x^3 \equiv -1~\mod n$ may not produce a 6-valent first-kind Frobenius circulant even when every prime factor of $n$ is congruent to 1 modulo 6. For example, $12$ is a solution to $x^3 \equiv -1~\mod 91$ but not a solution to $x^2 - x + 1 \equiv 0~\mod 91$. Note that $12^2 \equiv 53~\mod 91$ but $TL_n(1, 12, 53)$ is not a first-kind Frobenius graph, for otherwise $H=\langle [12] \rangle \leq \Z_{91}^*$ would be semiregular on $\Z_{91} \setminus \{[0]\}$, which is not true as $\gcd(52,91) = 13 > 1$.}
\end{remark}

\medskip
\textsf{D. Prime power orders.}~~Theorem \ref{existence} together with its proof implies the following result.

\begin{cor}
\label{cor:unique}
Let $p$ be a prime such that $p \equiv 1~\mod 6$. Then for every integer $e \ge 1$ there is a unique 6-valent first-kind Frobenius circulant of order $p^e$, namely
$$
\Ga(p^e) =  TL_{p^e}(a_e, a_e-1, 1),
$$ 
where $a_e = (p^e + 1)(v+1)/2~\mod p^e$ with $v$ a solution to $x^2 \equiv -3~\mod p^e$.
\end{cor}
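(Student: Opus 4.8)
The plan is to derive Corollary~\ref{cor:unique} directly from Theorem~\ref{existence} applied to $n = p^e$. First I would check that the two equivalent conditions of Theorem~\ref{existence} hold: since $p \equiv 1 \bmod 6$, clearly $n = p^e \equiv 1 \bmod 6$, so it remains to confirm that $x^2 - x + 1 \equiv 0 \bmod p^e$ is solvable. By the argument given in part (iii) of the proof of Theorem~\ref{existence}, solvability of (\ref{eq:3rd}) modulo $p^e$ is equivalent to solvability of $x^2 \equiv -3 \bmod p^e$, which by \cite[Proposition 4.2.3]{IR} (using $p \ge 5$) is equivalent to solvability of $x^2 \equiv -3 \bmod p$, i.e.\ to $\Legendre{-3}{p} = 1$. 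The computation of $\Legendre{-3}{p}$ in the proof of Theorem~\ref{existence} shows that $\Legendre{-3}{p} = 1$ precisely when $p \equiv 1 \bmod 6$, so the hypothesis $p \equiv 1 \bmod 6$ guarantees condition~(b).

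Next I would invoke the counting statement~(e) of Theorem~\ref{existence}. Since $n = p^e$ is a prime power, the number $l$ of distinct prime factors of $n$ equals $1$, so there are exactly $2^{l-1} = 2^0 = 1$ pairwise non-isomorphic 6-valent first-kind Frobenius circulants of order $p^e$. This is exactly the uniqueness assertion. To pin down the explicit form, I would use part~(d): the unique circulant is $TL_{p^e}(a, a-1, 1)$ for a solution $a$ to (\ref{eq:3rd}). Finally, to get the stated formula $a_e = (p^e+1)(v+1)/2 \bmod p^e$, I would quote the explicit correspondence established in the proof of Theorem~\ref{existence} (and restated in Remark~\ref{rem:prac}(d)): since $p^e$ is odd, $a$ solves (\ref{eq:3rd}) if and only if $2a - 1 \equiv v \bmod p^e$ for a solution $v$ of $x^2 \equiv -3 \bmod p^e$, and $2a - 1 \equiv v \bmod p^e$ has the unique solution $a \equiv (p^e+1)(v+1)/2 \bmod p^e$ because $\gcd((p^e+1)/2,\, p^e) = 1$.

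There is essentially no new obstacle here — the corollary is a specialization of Theorem~\ref{existence} to $n = p^e$, and the only mild point of care is making sure that the chain of equivalences (solvability of (\ref{eq:3rd}) mod $p^e$ $\Leftrightarrow$ $\Legendre{-3}{p} = 1$ $\Leftrightarrow$ $p \equiv 1 \bmod 6$) is cited correctly from the proof of Theorem~\ref{existence} rather than re-derived. One should also note that the two solutions $a$ and $-a^2$ of (\ref{eq:3rd}) that arise from a single $v$ yield the same circulant (as observed at the end of the proof of Theorem~\ref{existence}), which is consistent with the count $2^{l-1} = 1$ and explains why a single representative formula for $a_e$ suffices. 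I expect the write-up to be short: state that conditions (a) and (b) of Theorem~\ref{existence} hold for $n = p^e$, apply (e) with $l = 1$ to get uniqueness, and apply (d) together with~(\ref{eq:a}) to get the explicit description.
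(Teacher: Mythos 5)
Your proposal is correct and matches the paper's intent exactly: the paper gives no separate proof, stating only that Corollary~\ref{cor:unique} follows from Theorem~\ref{existence} "together with its proof," and your argument (verifying condition (b) via the equivalence with $\Legendre{-3}{p}=1$, applying (e) with $l=1$ for uniqueness, and reading off the formula for $a_e$ from the correspondence in part (iii) of that proof) is precisely the specialization the authors have in mind. No gaps.
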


This case is both interesting and significant because, as we will see in Theorem \ref{thm:quo}, every 6-valent first-kind Frobenius circulant with order a multiple of $p^e$ is a topological cover of $\Ga(p^e)$. In particular, every graph in the sequence $\Ga(p), \Ga(p^2), \ldots, \Ga(p^e), \ldots$ is a topological cover of the graphs preceding it. Moreover, starting from $\Ga(p)$ we can construct $\Ga(p^e)$ recursively by constructing a solution $a_{s+1}$ to $x^2 - x + 1 \equiv 0~\mod p^{s+1}$ based on a solution $a_s$ to $x^2 - x + 1 \equiv 0~\mod p^s$, $s=1, 2, \ldots$. Using a standard procedure in number theory (see e.g.~\cite[Section 2.6]{NZ}), we have $a_{s+1} = a_s + p^s t$, $s = 1, 2, \ldots$, where $t$ is a solution to $(2a_s-1)t \equiv -(a_s^2 -a_s + 1)/p^s~\mod p$. For instance, in the case when $p=7$, we get $a_1 = 3, a_2 = 31, a_3 = 325, \ldots$, recursively.  

\medskip
\textsf{E. HARTS, or hexagonal meshes.}~~HARTS was proposed \cite{CSK} as a distributed real-time computing system, and its properties were studied in \cite{CSK, DRS, ABF}. We now explain that it belongs to the family of 6-valent first-kind Frobenius circulants. 
\begin{ex} 
\label{optimal}
Let $k \ge 2$ be an integer and $n_k=3k^2+3k+1$. It was proved in \cite[Theorem 1]{TZ1} that $TL_{n_k} = TL_{n_k} (3k+2, 3k+1, 1)$ ($=TL_{n_k} (1, 3k+1,-(3k+2))$) is a 6-valent first-kind Frobenius circulant. This is now an immediate consequence of Theorem \ref{existence}, because $v = 6k+3$ is a solution to $x^2 \equiv -3~\mod n$ and it gives rise to the solution $a=(v+1)/2=3k+2$ to (\ref{eq:3rd}).

It is known \cite{YFMA} that $TL_{n_k}$ has the maximum possible order among all 6-valent geometric circulants of diameter $k$. Optimal gossiping and routing schemes, and broadcasting and embedding properties of $TL_{n_k}$ have been studied in \cite{TZ1} and \cite{LOZ}, respectively.

The HARTS $H_k$ of size $k$ has diameter $k-1$ and $n_{k-1} = 3k^2 - 3k + 1$ vertices \cite{CSK}, and is isomorphic \cite{CSK, ABF} to the circulant $\Cay(\Z_{n_{k-1}}, S)$ with $S = \{\pm [k-1], \pm [k], \pm [2k-1]\}$, where the residue classes are modulo $n_{k-1}$. Since $[3k] \in \Z^*_{n_{k-1}}$, we have $\Cay(\Z_{n_{k-1}}, S) \cong \Cay(\Z_{n_{k-1}}, S')$ via the isomorphism $[x] \mapsto [3k] [x]$, where $S' = \{\pm [3k] [k-1], \pm [3k] [k], \pm [3k] [2k-1]\} = \{\pm [1], \pm [3k-1], \pm [3k-2]\}$. It follows that $H_k$ is isomorphic to $TL_{n_{k-1}}$. 

In \cite{ABF} it was noted that $H_k$ is isomorphic to the EJ graph $EJ_{k + (k-1)\rho}$. Thus $TL_{n_{k-1}}$ is isomorphic to $EJ_{k + (k-1)\rho}$. This is not a coincidence: we will see in Theorem \ref{thm:EJ} that any EJ graph $EJ_{a+b \rho}$ with $\gcd(a, b) = 1$ is isomorphic to a 6-valent first-kind Frobenius circulant. 
\qed \end{ex}

\section{Frobenius versus Eisenstein-Jacobi}
\label{sec:EJ}

In this section we prove that 6-valent first-kind Frobenius graphs form a (proper) subfamily of the family of Eisenstein-Jacobi graphs. This result enables us to obtain the distance distribution of the former from that of the latter \cite{FB}. We will also show that all Eisenstein-Jacobi graphs are arc-transitive. 

\medskip
\textsf{A. Eisenstein-Jacobi graphs.}~~Let $\rho = (1+\sqrt{-3})/2$ and let $\Z[\rho] = \{x + y\rho: x, y \in \Z\}$ be the ring of Eisenstein-Jacobi integers \cite{IR}. It is well-known \cite{IR} that $\Z[\rho]$ is a Euclidean domain with norm defined by $N(x + y\rho) = x^2+xy+y^2$. We have $\rho^2 - \rho + 1 = 0, \rho^3 = -1$ and the set of units of $\Z[\rho]$ is $\{\rho^j: j \in \Z\} = \{\pm 1, \pm \rho, \pm \rho^2\} = \{\pm 1, \pm \rho, \pm (\rho-1)\}$.  

Let $0 \ne \a = c+d\rho \in \Z[\rho]$. 
Consider the quotient ring $\Z[\rho]/(\a)$ of $\Z[\rho]$ with respect to the principal ideal $(\a)$. For any $\eta \in \Z[\rho]$, let $[\eta]_{\a} \in \Z[\rho]/(\a)$ denote the residue class containing $\eta$ modulo $\a$. If $N(\a) \ge 7$, the {\em Eisenstein-Jacobi graph} (or {\em EJ graph} for short) $EJ_{\a}$ generated by $\a$ is defined \cite{MBG} as the Cayley graph on the additive group of $\Z[\rho]/(\a)$ with respect to $\{\pm [1]_{\a}, \pm [\rho]_{\a}, \pm [\rho^2]_{\a}\}$. The assumption $N(\a) \ge 7$ ensures that $\pm [1]_{\a}, \pm [\rho]_{\a}, \pm [\rho^2]_{\a}$ are pairwise distinct and so $EJ_{\a}$ is a 6-valent graph with $N(\a)$ vertices.  

Instead of $\Z[\rho]$, in \cite{MBG} EJ graphs are defined on $\Z[\omega]$ with norm $N(x + y\omega) = x^2-xy+y^2$, where $\omega = (-1+\sqrt{-3})/2$. Although $EJ_{c+d\om}$ defined on $\Z[\omega]$ in this way has $c^2 - cd + d^2$ vertices and is different from our graph $EJ_{c+d\rho}$, the family of EJ graphs is the same \cite{FB} no matter whether $\Z[\rho]$ or $\Z[\omega]$ is used, and all results for EJ graphs on $\Z[\omega]$ can be translated into results for EJ graphs on $\Z[\rho]$. Our terminology in this and the next sections agrees with that in \cite{FB}.  

\begin{lem}
\label{lem:iso} (\cite[Theorem 20]{MBG}) 
Let $\a = c+d\rho \in \Z[\rho]$ be such that $N(\a) \ge 7$ and $\gcd(c, d) = 1$. Denote $n = N(\a)$. Then 
$$
EJ_{\a} \cong TL_{n}(c, d, c+d).
$$
\end{lem}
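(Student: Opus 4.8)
The plan is to exhibit an explicit ring isomorphism $\Z[\rho]/(\a) \cong \Z_n$ and track where the connection set $\{\pm[1]_\a, \pm[\rho]_\a, \pm[\rho^2]_\a\}$ goes. First I would observe that since $\gcd(c,d)=1$, the quotient ring $\Z[\rho]/(\a)$ is a cyclic group of order $n = N(\a) = c^2+cd+d^2$ under addition; indeed, one checks that $1$ generates it additively, because any $x+y\rho$ can be reduced modulo $\a$ to an integer. Concretely, from $\a = c+d\rho \equiv 0$ we get $d\rho \equiv -c \pmod{\a}$, and using $\gcd(c,d)=1$ we can solve for $\rho$: choose integers $u,v$ with $uc+vd=1$, so that multiplying $d\rho \equiv -c$ appropriately (together with $\rho^2 = \rho-1$, i.e. $\rho(\rho-1)=-1$ so $\rho$ is a unit) yields $\rho \equiv r \pmod{\a}$ for some integer $r$. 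Then the map $\Z \to \Z[\rho]/(\a)$, $m \mapsto [m]_\a$, is a surjective ring homomorphism with kernel $(n)$, giving $\Z[\rho]/(\a) \cong \Z_n$ with $[\rho]_\a \leftrightarrow [r]$.

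Next I would pin down $r$ precisely. Since $\rho$ satisfies $x^2 - x + 1 = 0$, its image $r$ in $\Z_n$ satisfies $r^2 - r + 1 \equiv 0 \pmod n$; in particular $[r]$ has order $6$ in $\Z_n^*$ and $\{\pm[1],\pm[r],\pm[r^2]\} = \{\pm[1],\pm[r],\pm[r-1]\}$. Under the additive isomorphism $\Z_n \cong \Z[\rho]/(\a)$, the connection set $\{\pm[1]_\a,\pm[\rho]_\a,\pm[\rho^2]_\a\}$ corresponds exactly to $\{\pm[1],\pm[r],\pm[r^2]\} = \{\pm[1],\pm[r],\pm[r-1]\}$ in $\Z_n$. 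Hence $EJ_\a \cong \Cay(\Z_n, \{\pm[1],\pm[r],\pm[r-1]\}) = TL_n(r, r-1, 1)$. It then remains to identify this with $TL_n(c,d,c+d)$: I would exhibit a unit $[t] \in \Z_n^*$ with $t\cdot\{1, r-1, r\} = \{c, d, c+d\}$ up to sign, which multiplying an automorphism of $\Z_n$ turns into a graph isomorphism (recall $\Aut{\Z_n}\cong\Z_n^*$ acts on circulants by rescaling the connection set). Natural candidates: from $\a = c + d\rho \equiv 0$ we have $c \equiv -d r$, so $c + dr \equiv 0$, i.e. $[d]\cdot[r] \equiv -[c]$; this suggests taking $t$ related to $d^{-1}$ or $c^{-1}$ modulo $n$ and checking the three ratios land on $\{c,d,c+d\}$ up to signs and reordering (note $c+d = -dr + d\cdot(\text{something})$ should match $\pm d r^2$ or similar via $r^2 = r-1$).

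The main obstacle, I expect, is the bookkeeping in this last step: verifying that the relabelling unit $[t]$ simultaneously sends all three generator-pairs $\{\pm 1, \pm r, \pm(r-1)\}$ onto $\{\pm c, \pm d, \pm(c+d)\}$, keeping careful track of signs (since $TL_n$ is defined by an unordered symmetric set, we have freedom to negate and permute) and of the coprimality facts $\gcd(c,n)=\gcd(d,n)=\gcd(c+d,n)=1$ which guarantee the entries are genuine valid parameters. A clean way to organize it: use the identity $c^2 + cd + d^2 = n$ to see $c \cdot c \equiv -cd - d^2 \equiv -d(c+d) \pmod n$ and similar relations, so that modulo $n$ the three quantities $c, d, c+d$ are pairwise "ratios" realized by the unit group element $[r]$ (since $d \cdot r \equiv -c$, one gets $c/d \equiv -r$, $(c+d)/d \equiv 1-r \equiv -r^2 \cdot(-1)$... ), and then the single unit $[t] = [d]^{-1}$ (or its negative) does the job after matching up signs. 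Alternatively, one can avoid choosing $r$ explicitly and argue directly that the additive map $\Z[\rho]/(\a) \to \Z_n$ sending $[1]_\a \mapsto [c+d]$ — or better, composing the canonical isomorphism with multiplication by the image of a suitable unit of $\Z[\rho]$ — carries $\{\pm 1, \pm\rho, \pm\rho^2\}$ to $\{\pm c, \pm d, \pm(c+d)\}$; but either route the substance is the same sign-and-permutation verification, which is routine but must be done carefully.
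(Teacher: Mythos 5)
Your proposal is correct and matches the paper's approach: the paper (citing \cite[Theorem 20]{MBG}) simply exhibits the explicit additive isomorphism $dx-cy \bmod n \mapsto [x+y\rho]_{\a}$ and observes that it carries $\{\pm c,\pm d,\pm(c+d)\}$ onto $\{\pm[\rho]_{\a},\pm[1]_{\a},\pm[\rho^2]_{\a}\}$, which is exactly your two-step construction (ring isomorphism $\Z[\rho]/(\a)\cong\Z_n$ sending $\rho\mapsto r\equiv -cd^{-1}$, followed by rescaling by the unit $\pm[d]^{-1}$) collapsed into a single formula. Your candidate $t=\pm[d]^{-1}$ does work, since $-d\cdot\{1,\,r,\,r-1\}\equiv\{-d,\,c,\,c+d\}\pmod n$, so the sign-and-permutation bookkeeping you flag as the main obstacle goes through.
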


In fact, since $\gcd(c,d)=1$, any integer can be expressed as $dx-cy$ for some integers $x$ and $y$. One can verify that 
\begin{equation}
\label{eq:isom}
\Z_{N(\a)} \rightarrow \Z[\rho]/(\a),\, dx-cy~\mod n \mapsto [x+y\rho]_{\a},\, x, y \in \Z
\end{equation}
defines the required isomorphism from $TL_{n}(c, d, c+d)$ to $EJ_{\a}$. 

\medskip
\textsf{B. 6-valent first-kind Frobenius circulants are EJ graphs.} 

\begin{thm}
\label{thm:EJ} 
\begin{itemize}
\item[\rm (a)] 
Every 6-valent first-kind Frobenius circulant $TL_{n}(a, a-1, 1)$ is isomorphic to some $EJ_{\a}$ with $\a = c+d\rho$ satisfying $\gcd(c,d)=1$. 
 
Moreover, letting $k$ be the integer defined by $a^2 - a +1 = kn$, we have $\a = (rn+m)+(sn-ma)\rho$, $(rn+m)+(sn+m(a-1))\rho$ or $(rn+ma)+(sn-m(a-1))\rho$, where $(m, r, s)$ is a solution to one of the following Diophantine equations, respectively,
\begin{equation}
\label{eq:cs1}
km^2-[(a-2)r+(2a-1)s]m+(r^2+rs+s^2)n=1
\end{equation} 
\begin{equation}
\label{eq:cs2}
km^2+[(a+1)r+(2a-1)s]m+(r^2+rs+s^2)n=1
\end{equation}
\begin{equation}
\label{eq:cs3}
km^2+[(a+1)r-(a-2)s]m+(r^2+rs+s^2)n=1.
\end{equation}

\item[\rm (b)] 
Let $\a = c+d\rho \in \Z[\rho]$ be such that $N(\a) \ge 7$ and $\gcd(c, d) = 1$. Then $EJ_{\a}$ is isomorphic to a 6-valent first-kind Frobenius circulant if and only if $N(\a) \equiv 1~\mod 6$. 
\end{itemize}
\end{thm}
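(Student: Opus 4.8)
The plan is to establish part (a) by producing the Eisenstein--Jacobi integer $\alpha$ explicitly from $a$ via a gcd computation in $\Z[\rho]$, and then to obtain part (b) cheaply from part (a), Lemma~\ref{lem:iso} and Theorem~\ref{existence}. I would deal with part (b) first. By Lemma~\ref{lem:iso}, $EJ_{\alpha}\cong TL_n(c,d,c+d)$ where $n=N(\alpha)=c^2+cd+d^2$. If $n\equiv 1\pmod 6$, then since $\gcd(c,d)=1$ forces $\gcd(c,n)=1$ we may set $u\equiv d/c\pmod n$; the identity $n=c^2+cd+d^2$ gives $u^2+u+1\equiv 0\pmod n$, so $a:=-u$ satisfies $a^2-a+1\equiv 0\pmod n$, and by Theorem~\ref{existence} $TL_n(a,a-1,1)$ is a 6-valent first-kind Frobenius circulant. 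Multiplying the vertices of $TL_n(c,d,c+d)$ by $[c]^{-1}$ yields an isomorphism onto $\Cay(\Z_n,\{\pm[1],\pm[u],\pm[1+u]\})$, and $\{\pm[1],\pm[u],\pm[1+u]\}=\{\pm[1],\pm[a],\pm[a^2]\}$ because $u=-a$ and $1+u=1-a\equiv -a^2\pmod n$; hence $EJ_{\alpha}\cong TL_n(a,a-1,1)$. Conversely, every 6-valent first-kind Frobenius circulant has order congruent to $1$ modulo $6$ by Theorem~\ref{existence}, and $EJ_{\alpha}$ has $N(\alpha)$ vertices, so if $EJ_{\alpha}$ is such a circulant then $N(\alpha)\equiv 1\pmod 6$.

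For part (a), let $a$ be as in the statement, write $a^2-a+1=kn$, and put $\alpha:=\gcd(n,\,a-\rho)$ in the Euclidean domain $\Z[\rho]$, say $\alpha=c+d\rho$. First $\gcd(c,d)=1$: a common integer divisor $g$ of $c,d$ divides $\alpha$, hence divides $a-\rho=a+(-1)\rho$, so $g\mid 1$. The crux is $N(\alpha)=n$. By Theorem~\ref{existence}(c) every prime $p\mid n$ satisfies $p\equiv 1\pmod 6$ and hence splits in $\Z[\rho]$ as $p=\pi_p\bar{\pi}_p$ with non-associate primes of norm $p$; write $p^e\,\|\,n$, so $v_{\pi_p}(n)=v_{\bar{\pi}_p}(n)=e$. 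From $\rho^2-\rho+1=0$ we get $N(a-\rho)=a^2-a+1=kn$, so $v_{\pi_p}(a-\rho)+v_{\bar{\pi}_p}(a-\rho)=v_p(kn)\ge e$; moreover $p\nmid a-\rho$ (else $p\mid a$ and $p\mid 1$), so one of these two valuations is $0$ and the other is $\ge e$. Therefore $v_{\pi_p}(\alpha)+v_{\bar{\pi}_p}(\alpha)=\min(e,v_{\pi_p}(a-\rho))+\min(e,v_{\bar{\pi}_p}(a-\rho))=e$, whence $v_p(N(\alpha))=e=v_p(n)$; since $\alpha\mid n$ makes every prime factor of $N(\alpha)$ a divisor of $n$, it follows that $N(\alpha)=n$ (and $\gcd(\alpha,\bar{\alpha})=1$ as well).

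With $N(\alpha)=n$ in hand I would write the isomorphism down explicitly. Because $\gcd(c,d)=1$, the class of $\rho$ lies in the image of $\Z$ in $\Z[\rho]/(\alpha)$, so $\Z\to\Z[\rho]/(\alpha)$ is onto with kernel $(\alpha)\cap\Z=(n)$; since $\alpha\mid a-\rho$, the rule $[x+y\rho]_{\alpha}\mapsto[x+ya]$ (evaluation at $\rho=a$) is therefore a well-defined ring homomorphism $\phi:\Z[\rho]/(\alpha)\to\Z_n$, and it is bijective since $|\Z[\rho]/(\alpha)|=N(\alpha)=n$. This $\phi$ sends the connection set $\{\pm[1]_{\alpha},\pm[\rho]_{\alpha},\pm[\rho^2]_{\alpha}\}$ of $EJ_{\alpha}$ onto $\{\pm[1],\pm[a],\pm[a^2]\}$, which by (\ref{eq:H}) is the connection set $H$ of $TL_n(a,a-1,1)$, so $\phi$ is the required graph isomorphism. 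For the three explicit shapes of $\alpha$, the conditions $N(\alpha)=n$ and $\alpha\mid a-\rho$ amount to $c^2+cd+d^2=n$ together with $c+ad\equiv 0\pmod n$; substituting into $c^2+cd+d^2=n$ the three admissible parametrizations of $(c,d)$ modulo $n$ coming from the associates of $\alpha$ and of $\bar{\alpha}$, namely $(c,d)=(rn+m,\,sn-ma)$, $(rn+m,\,sn+m(a-1))$, $(rn+ma,\,sn-m(a-1))$, and using $a^2\equiv a-1$ and $a^2-a+1=kn$, collapses by a direct computation to (\ref{eq:cs1}), (\ref{eq:cs2}), (\ref{eq:cs3}) respectively.

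The main obstacle is the norm identity $N(\alpha)=n$, i.e.\ showing that the Euclidean gcd $\gcd(n,\,a-\rho)$ extracts precisely one of the two conjugate primes above each $p\mid n$, to the full exponent $e$ and no more. This uses essentially that all prime factors of $n$ split (supplied by Theorem~\ref{existence}(c)) and that $a$ is an actual root of $x^2-x+1$ modulo $n$, not merely of $x^3+1$; keeping the $\pi_p$- and $\bar{\pi}_p$-adic bookkeeping consistent under conjugation is the delicate step. By contrast part (b), the check that $\phi$ preserves connection sets, and the reduction of the Diophantine conditions to (\ref{eq:cs1})--(\ref{eq:cs3}) are routine, the last being a lengthy but purely mechanical computation that I would not carry out in full here.
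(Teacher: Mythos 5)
Your part (b) and the first assertion of part (a) are correct, and your route is essentially a reworking of the paper's. The paper takes $\alpha$ to be a generator of the kernel of the evaluation homomorphism $f:\Z[\rho]\to\Z_n$, $x+y\rho\mapsto[x+ya]$ (well defined because $a^2-a+1\equiv 0\pmod n$), and then $N(\alpha)=|\Z[\rho]/(\alpha)|=|\Z_n|=n$ is immediate from the first isomorphism theorem. Your $\alpha=\gcd(n,a-\rho)$ generates exactly that kernel (both ideals contain $n$ and $a-\rho$ and both have index $n$), so your prime-splitting and valuation bookkeeping, while correct, proves by hand what the paper gets for free. Part (b) is the paper's argument up to using $[c]^{-1}$ in place of $[d]^{-1}$, which merely swaps the solution $a$ for the companion solution $1-a\equiv -a^2$ and yields the same graph.

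The genuine gap is in the ``Moreover'' clause of part (a). As the paper proves it (and as it is used afterwards, e.g.\ in the remark that a single circulant can be isomorphic to $EJ_{\alpha}$ and $EJ_{\alpha'}$ arising from \emph{different} equations among (\ref{eq:cs1})--(\ref{eq:cs3})), the claim is that \emph{every} $\alpha=c+d\rho$ with $\gcd(c,d)=1$ and $EJ_{\alpha}\cong TL_n(a,a-1,1)$ has one of the three stated shapes. You exhibit one such $\alpha$ together with its associates and conjugates and assert that the three parametrizations ``come from the associates of $\alpha$ and of $\bar{\alpha}$''; nothing in your argument excludes a further $\alpha'$ of norm $n$, not an associate of $\alpha$ or $\bar{\alpha}$, with $EJ_{\alpha'}\cong TL_n(a,a-1,1)$. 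The paper closes this by combining Lemma \ref{lem:iso} (giving $TL_n(c,d,c+d)\cong TL_n(a,a-1,1)$) with the fact that $\Z_n$ is a connected $7$-CI-group (\cite[Theorem 4.2]{Li}), which forces the two connection sets to differ by multiplication by some unit $[m]$; that is what pins down $(c,d)\equiv(m,-ma)$, $(m,m(a-1))$ or $(ma,-m(a-1))\pmod n$ and hence, upon substitution into $c^2+cd+d^2=n$, equations (\ref{eq:cs1})--(\ref{eq:cs3}). A secondary slip: your claimed characterization ``$N(\alpha)=n$ together with $c+ad\equiv 0\pmod n$'' is satisfied only by the second shape; the first and third satisfy the conjugate condition $c+(1-a)d\equiv 0\pmod n$ (since, e.g., for $(c,d)=(rn+m,sn-ma)$ one gets $c+ad\equiv m(2-a)\not\equiv 0$ because $\gcd(a-2,n)=1$), so they are associates of $\bar{\alpha}$ rather than of $\alpha$. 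You would need to import the CI-group step, or an equivalent uniqueness argument for ideals of norm $n$ with cyclic quotient attached to the particular solution $a$, to recover the full strength of the statement.
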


\begin{proof} 
(a) Let $TL_{n}(a, a-1, 1)$ be a 6-valent first-kind Frobenius circulant, where $n \ge 7$, $n \equiv 1~\mod 6$, and $a$ is a solution to (\ref{eq:3rd}). Define $f: \Z[\rho] \rightarrow \Z_n$ by $f(x+y\rho) = [x+ya]$. Since $a^2 - a + 1 \equiv 0~\mod n$, one can verify that $f$ is a well-defined ring homomorphism. Since $\Z[\rho]$ is a Euclidean domain, it is a principal ideal domain. Thus the kernel of $f$ must be a principal ideal of $\Z[\rho]$; that is, $\ker(f) = (\a)$ for some $0 \ne \a = c+d\rho \in \Z[\rho]$. Since $\Z[\rho]/(\a) \cong \Z_n$ and $f$ maps $\{\pm 1, \pm \rho, \pm (\rho-1)\}$ to $\{\pm[1], \pm[a], \pm[a-1]\}$, we obtain $EJ_{\a} \cong TL_{n}(a, a-1, 1)$.  

Thus $N(\a) = n$, and by Lemma \ref{lem:iso}, $TL_{n}(a, a-1, 1) \cong TL_{n}(c, d, c+d)$. Since $\Z_n$ is a $7$-CI-group by \cite[Theorem 4.2]{Li}, it follows that there exists an integer $m$ with $\gcd(m,n)=1$ such that $\{[ma], [m(a-1)], [m], -[ma], -[m(a-1)], -[m]\} = \{[c], [d], [c+d], -[c], -[d], -[c+d]\}$. 
Since $2, a-1$ and $a$ are all coprime to $n$, we have $\{[c], [d]\} = \{[m], -[ma]\}$, $\{-[m], [ma]\}$, $\{[m], [m(a-1)]\}$, $\{-[m], -[m(a-1)]\}$, $\{[ma], -[m(a-1)]\}$ or $\{-[ma], [m(a-1)]\}$. Since the roles of $[c]$ and $[d]$ are symmetric and $TL_{n}(c, d, c+d) = TL_{n}(-c, -d, -c-d)$, it suffices to consider three cases: $([c], [d]) = ([m], -[ma])$, $([m], [m(a-1)])$ or $([ma], -[m(a-1)])$. 

In the case when $([c], [d]) = ([m], -[ma])$, there exist integers $r$ and $s$ such that $c=rn+m, d=sn-ma$, $\gcd(rn+m, sn-ma)=1$ and $n=(rn+m)^2+(rn+m)(sn-ma)+(sn-ma)^2$. This is equivalent to saying that $(m, r, s)$ is a solution to (\ref{eq:cs1}). One can verify that $(m, r, s)$ satisfies $\gcd(m,n)=1$ and $\gcd(c, d)=1$. The other two cases can be treated similarly. 

(b) Denote $n = N(\a) = c^2 + cd + d^2$. The necessity follows from Theorem \ref{existence} immediately. 

To prove the sufficiency, suppose $n \equiv 1~\mod 6$. Since $\gcd(c, d) = 1$ and $n = c^2 + cd + d^2$, we have $[d] \in \Z_n^*$. Let $[g]$ be the inverse of $[d]$ in $\Z_n^*$ and let $a \equiv -cg~\mod n$ be such that $0 \le a \le n-1$. Multiplying $c^2 + cd + d^2 = n$ by $g^2$, we obtain $a^2 - a + 1 \equiv 0~\mod n$. Thus, by Theorem \ref{existence}, $TL_{n}(a, a-1, 1)$ is a 6-valent first-kind Frobenius circulant. By Lemma \ref{lem:iso} and noting $\gcd(g, n) = 1$, we obtain $EJ_{\a} \cong TL_{n}(c, d, c+d) \cong TL_{n}(-cg, -dg, -(c+d)g) \cong TL_{n}(a, a-1, 1)$. 
\qed 
\end{proof}

As we will see in Examples \ref{third} and \ref{ex:6g}, a 6-valent first-kind Frobenius circulant may be isomorphic to two EJ graphs $EJ_{\a}, EJ_{\a'}$ with $\a \ne \a'$, and $\a, \a'$ can be solutions of different equations among (\ref{eq:cs1})--(\ref{eq:cs3}).   

\medskip
\textsf{C. Distance distribution and examples.}~~Given a Cayley graph $\Ga$ and integer $t \ge 0$, let $W_t(\Ga)$ denote the number of vertices in $\Ga$ whose distance to the identity element (or any other fixed element) of the underlying group is equal to $t$. In \S \ref{sec:rg} we will need the values of these parameters for a 6-valent first-kind Frobenius graph. Theorem \ref{thm:EJ} enables us to obtain such information by using the following known result.  

\begin{thm}
\label{thm:fb}
(\cite[Theorem 27]{FB}) Let $\a = c+d\rho \in \Z[\rho]$ be such that $\a \ne 0$ and $c \ge d \ge 0$. Then
$$
W_t(EJ_{\a}) = \left\{ 
\begin{array}{ll}
1, & t = 0\\[0.2cm]
6t, & 1 \le t < (c+d)/2\\[0.2cm]
6(2c+d) - 18t, & (c+d)/2 < t < (2c+d)/3\\[0.2cm]
2, & c \equiv d~\mod 3\;\,\mbox{and}\;\,t = (2c+d)/3\\[0.2cm]
0, & t > (2c+d)/3.
\end{array} 
\right.
$$
In particular, the diameter of $EJ_{\a}$ is equal to $\lfloor (2c+d)/3 \rfloor$.
In addition, if $c+d=2t^*$ is even, then $W_{t^*}(EJ_{\a})$ is equal to $c^2 + cd + d^2$ minus the total number of vertices listed above.  
\end{thm}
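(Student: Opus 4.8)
The idea is to translate the whole statement into lattice geometry in $\Z[\rho]$ and then carry out three essentially independent counting tasks: a shortest-vector computation (handles small $t$), a covering-radius computation (handles large $t$ and the diameter), and a "folding'' computation (handles the middle range). Since $EJ_{\a}$ is the Cayley graph on $\Z[\rho]/(\a)$ with connection set the unit group $U=\{\pm1,\pm\rho,\pm\rho^2\}$, its graph metric is the quotient of the word metric of the infinite Cayley graph $\Cay(\Z[\rho],U)$, which is the triangular lattice. Writing $\|\cdot\|$ for that word norm, one has the explicit description $\|x+y\rho\|=|x|+|y|$ if $xy\ge0$ and $\|x+y\rho\|=\max(|x|,|y|)$ if $xy\le0$, and the sphere $S_t=\{\xi:\|\xi\|=t\}$ in $\Z[\rho]$ has $|S_t|=6t$ for $t\ge1$, the ball $B_t$ having $3t^2+3t+1$ points. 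Then $W_t(EJ_{\a})$ is exactly the number of cosets of the ideal $(\a)$ whose minimum-norm representative has norm $t$; equivalently, with $g(t):=|\text{image of }B_t\text{ in }\Z[\rho]/(\a)|$ one has $W_t=g(t)-g(t-1)$, so the whole theorem reduces to understanding the function $g$.

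\textbf{The two easy regimes.} For small $t$ the reduction map $\Z[\rho]\to\Z[\rho]/(\a)$ is injective on $B_t$ precisely when no nonzero element of $(\a)$ has norm $\le 2t$, so I would compute $\lambda:=\min\{\|\beta\|:0\ne\beta\in(\a)\}$. Using $c\ge d\ge0$, the six associates $\a u$ ($u\in U$) all satisfy $\|\a u\|=c+d$, while any other nonzero element of $(\a)$ is a non-unit multiple of $\a$, hence of Euclidean length at least $\sqrt{3N(\a)}$, which forces a strictly larger hexagonal norm; thus $\lambda=c+d$. Consequently $g(t)=3t^2+3t+1$ and $W_t=6t$ for $1\le t<(c+d)/2$, and $W_0=1$. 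For large $t$ I would compute the covering radius $R_{\mathrm{cov}}:=\max_{\xi\in\Z[\rho]}\min_{\beta\in(\a)}\|\xi-\beta\|$: reducing an arbitrary $\xi$ modulo the basis $\{\a,\a\rho\}$ of $(\a)$ into a fundamental hexagon and bounding its hexagonal norm shows that the extreme points are the barycenters of the lattice triangles, lying at hexagonal distance $(2c+d)/3$ from $(\a)$. This gives $\diam(EJ_{\a})=\lfloor(2c+d)/3\rfloor$ and $W_t=0$ for $t>(2c+d)/3$, and when $3\mid 2c+d$ (equivalently $c\equiv d\bmod 3$) there are exactly two such deep-hole cosets, so $W_{(2c+d)/3}=2$.

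\textbf{The middle range} $(c+d)/2<t<(2c+d)/3$, which I expect to be the \emph{main obstacle}. Here $g(t)<|B_t|$ and one must compute the defect $|B_t|-g(t)$, i.e. count the collisions: unordered pairs of points of $B_t$ differing by a nonzero element of $(\a)$, the relevant ideal elements being the six associates $\a u$ of norm $c+d<2t$ together with their nearest neighbours in $(\a)$. I would fix an explicit fundamental hexagon $\mathcal H$ for $(\a)$, exploit the order-$6$ rotational automorphism $[\eta]\mapsto[\rho\eta]$ of $EJ_{\a}$ together with a reflection (so that $EJ_{\a}$ is even arc-transitive, cf.\ Theorem \ref{thm:arc tran}) to reduce to a one-parameter count along one sector of the boundary of $\mathcal H$, and verify that the image of $S_t$ loses points at the steady rate that makes $W_t=6(2c+d)-18t$; the fact that $g$ is a downward parabola in this range, matching $N(\a)$ at $t=\diam(EJ_{\a})$ and $N(\a)-2$ at $t=\diam(EJ_{\a})-1$, gives useful consistency checks.

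\textbf{Loose ends.} Finally, when $c+d=2t^{*}$ is even, $t^{*}=(c+d)/2$ is the unique value covered by neither the small-$t$ nor the middle-range formula, and there one reads off $W_{t^{*}}=N(\a)-\sum_{t\ne t^{*}}W_t$ from $\sum_t W_t=N(\a)$. The genuinely delicate points of the argument are: tracking exactly how $S_t$ wraps around the three pairs of opposite sides of $\mathcal H$ without double-counting the corners in the middle-range count; justifying that $\lambda=c+d$ and $R_{\mathrm{cov}}=(2c+d)/3$ are attained where claimed; and disposing of the degenerate overlaps (the case $c=d$, small $N(\a)$, and the coincidence of $t^{*}$ with $\diam(EJ_{\a})$), where the interval constraints in the statement collapse.
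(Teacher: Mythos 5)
First, a point of reference: the paper does not prove this statement at all --- it is imported verbatim as \cite[Theorem 27]{FB} --- so your proposal has to stand on its own as a reconstruction, and judged that way it has a genuine gap. Your framework is sound and partly carried out: the identification of the graph metric with the quotient of the hexagonal word norm on $\Z[\rho]$, the formula $W_t=g(t)-g(t-1)$, and the computation of the minimal norm $\lambda=c+d$ of a nonzero element of $(\a)$ (all six associates $\a\rho^j$ do have hexagonal norm $c+d$ when $c\ge d\ge 0$, and the norm comparison $\|\xi\|\ge\sqrt{N(\xi)}$ does rule out non-unit multiples) together give a complete proof of $W_0=1$ and $W_t=6t$ for $1\le t<(c+d)/2$. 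That part is fine.

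The problem is that everything else --- which is where the actual content of the theorem lies --- is announced rather than proved. For the middle range you correctly reduce the defect $|B_t|-g(t)$ to counting collisions caused by the six associates of $\a$ (indeed, for $t<(2c+d)/3$ one has $2t<2c+d$, which is the norm of the second shell $(1+\rho)\a\rho^j$, so in fact only the six associates are relevant and your ``together with their nearest neighbours'' is superfluous); but you then write that you ``would \ldots verify that the image of $S_t$ loses points at the steady rate that makes $W_t=6(2c+d)-18t$.'' That is the conclusion, not an argument, and it is exactly the step you yourself flag as the main obstacle: one must actually track how the annuli wrap around the three pairs of sides of the fundamental region and resolve the triple-counted cosets near the corners (a coset can have three representatives in $B_t$, since $\a$, $\a\rho$, $\a\rho^2$ pairwise differ by associates of $\a$). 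Likewise the claims that the covering radius of $(\a)$ in the hexagonal norm is $(2c+d)/3$, attained only at the two centroid cosets $[\a(1+\rho)/3]$ and $[2\a(1+\rho)/3]$ when $c\equiv d\bmod 3$, are asserted without the required verification that \emph{every} residue class has a representative of norm at most $\lfloor(2c+d)/3\rfloor$; checking the centroids themselves is easy (each lies at hexagonal distance exactly $(2c+d)/3$ from all three surrounding lattice points), but that only gives the lower bound on the diameter. Until the middle-range count and the covering-radius upper bound are actually executed, the proposal establishes only the first two lines of the displayed formula.
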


Theorem \ref{thm:fb} covers all EJ graphs since any EJ graph is \cite{FB} isomorphic to some $EJ_{c+d\rho}$ with $c \ge d \ge 0$. This is because \cite{FB} $EJ_{\rho^j \a} \cong EJ_{\a}$ for every integer $j$ and $EJ_{c+d\rho} \cong EJ_{d+c\rho}$.  

We illustrate Theorems \ref{thm:EJ} and \ref{thm:fb} by the following examples. 

\begin{ex}
\label{third}
Let $a \ge 3$ be an integer such that all prime factors of $n = a^2 - a +1$ are congruent to 1 modulo 6. (Hence $a \not \equiv 2~\mod 3$.) Then $\Ga = TL_{n}(a, a-1, 1)$ is a 6-valent first-kind Frobenius circulant. Here $k = 1$ as $n = a^2 - a +1$, where $k$ is as in Theorem \ref{thm:EJ}. It can be verified that $(m, r, s) = (1, 0, 0)$ is a solution to each of (\ref{eq:cs1})--(\ref{eq:cs3}). Thus, by Theorem \ref{thm:EJ}, $\Ga \cong EJ_{1-a\rho} \cong EJ_{1+(a-1)\rho} \cong EJ_{a-(a-1)\rho}$.  

Regarding $\Ga \cong EJ_{(a-1)+\rho}$ as an EJ graph allows us to compute its distance distribution. Since $a \not \equiv 2~\mod 3$, by Theorem \ref{thm:fb} the diameter of $\Ga$ is $D=\lfloor (2a-1)/3 \rfloor$. Moreover, $W_t(\Ga) = 6t$ for $1 \le t < a/2$ and $W_t(\Ga) = 6((2a-1)-3t)$ for $a/2 < t \le D$. In addition, if $a$ is even, then $W_{a/2}(\Ga) = n - 1 - \sum_{t \ne a/2} W_t(\Ga) = n-1-\frac{3a(a-2)}{4}+\left(3D-\frac{3a}{2}\right)\left(3D-\frac{5a}{2}+5\right)$.  
\qed \end{ex} 

\begin{ex}
\label{ex:6g}
Let $n = 12g^2 + 1$ where $g \ge 1$ is an integer. Then $v = 6g$ is a solution to 
$x^2 \equiv -3~\mod n$ and it gives rise to the solution $a = (n+v+1)/2 = 6g^2 + 3g + 1$ to (\ref{eq:3rd}) (see Remark \ref{rem:prac}(d)). Thus $\Ga = TL_{n}(6g^2 + 3g + 1, 6g^2 + 3g, 1)$ is a 6-valent first-kind Frobenius circulant. 

Note that $a^2-a+1 = (3g^2 + 3g + 1)n$. Hence $k = 3g^2 + 3g + 1$. Since $(m, r, s) = (2g-1, 0, g)$ is a solution to (\ref{eq:cs1}), by Theorem \ref{thm:EJ}, $\Ga \cong EJ_{\a}$, where $\a = (2g-1)+[gn-(2g-1)a]\rho = (2g-1)+(2g+1)\rho$. From (\ref{eq:isom}) and the proof of Theorem \ref{thm:EJ}, $\Z_{n} \rightarrow \Z[\rho]/(\a), [u] \mapsto [-u\rho]_{\a}$ defines an isomorohism from $\Ga$ to $EJ_{\a}$. 

It can be verified that $(m, r, s) = (2g-1, 0, -g)$ is a solution to (\ref{eq:cs2}). From this we get $\Ga \cong EJ_{\b}$, where $\b = (2g-1)+[-gn+(2g-1)(a-1)]\rho = (2g-1)-4g\rho$, and $\Z_{n} \rightarrow \Z[\rho]/(\b), [u] \mapsto [-u\rho]_{\b}$ gives the required isomorphism.
\qed \end{ex}

\textsf{D. EJ graphs are arc-transitive.}~~Since any Cayley graph is vertex-transitive, all EJ graphs are vertex-transitive. We now prove that they are actually arc-transitive. (An arc-transitive graph without isolated vertices is vertex-transitive, but the converse is not true.) The proof is similar to that of a counterpart result \cite[Lemma 7]{Z1} for Gaussian graphs \cite{MBG}. 

\begin{thm}
\label{thm:arc tran}
Let $\a \in \Z[\rho]$ be such that $N(\a) \ge 7$. Let
$$
H_{\a} = \{\pm [1]_{\a}, \pm [\rho]_{\a}, \pm [\rho^2]_{\a}\}.
$$   
Then $(\Z[\rho]/(\a)) \rtimes H_{\a}$ is isomorphic to a subgroup of the automorphism group of $EJ_{\a}$, and $EJ_{\a}$ is $(\Z[\rho]/(\a)) \rtimes H_{\a}$-arc-transitive. 
\end{thm}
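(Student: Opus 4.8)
The plan is to exhibit an explicit group of automorphisms of $EJ_{\a}$ acting transitively on arcs, namely the affine group $(\Z[\rho]/(\a)) \rtimes H_{\a}$ acting on the vertex set $\Z[\rho]/(\a)$ by $[\xi]_{\a} \mapsto [\xi]_{\a}\cdot[\rho^j]_{\a} + [\eta]_{\a}$. First I would check that each such map is well-defined on the quotient ring (it is, since multiplication by the unit $[\rho^j]_{\a}$ and addition of $[\eta]_{\a}$ both respect the ideal $(\a)$) and is a bijection of $\Z[\rho]/(\a)$. Then I would verify it is a graph automorphism: two vertices $[\xi]_{\a}, [\xi']_{\a}$ are adjacent iff $[\xi]_{\a} - [\xi']_{\a} \in \{\pm[1]_{\a}, \pm[\rho]_{\a}, \pm[\rho^2]_{\a}\} = \{[\rho^i]_{\a}: 0 \le i \le 5\}$, and under the map the difference becomes $([\xi]_{\a}-[\xi']_{\a})\cdot[\rho^j]_{\a}$; since the connection set is precisely the set of units $\{[\rho^i]_{\a}\}$, which is a group closed under multiplication by $[\rho^j]_{\a}$, adjacency is preserved. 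This shows $(\Z[\rho]/(\a)) \rtimes H_{\a}$ maps homomorphically into $\Aut{EJ_{\a}}$.

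Next I would argue this action is faithful, so that the abstract group embeds as a subgroup of $\Aut{EJ_{\a}}$: if $[\xi]_{\a}\cdot[\rho^j]_{\a} + [\eta]_{\a} = [\xi]_{\a}$ for all $\xi$, then taking $\xi = 0$ gives $[\eta]_{\a}=[0]_{\a}$, and then taking $\xi = 1$ gives $[\rho^j]_{\a} = [1]_{\a}$, i.e. the element is the identity of the semidirect product (here one uses $N(\a) \ge 7$ so that $[1]_{\a}, [\rho]_{\a}, \dots, [\rho^5]_{\a}$ are distinct, as noted in the paper when $EJ_{\a}$ was defined). Finally, for arc-transitivity: the translation subgroup $\Z[\rho]/(\a)$ already acts transitively on vertices, so it suffices to show the stabiliser of the vertex $[0]_{\a}$ — which is exactly $H_{\a}$ acting by multiplication — is transitive on the six neighbours $\{[\rho^i]_{\a}: 0 \le i \le 5\}$ of $[0]_{\a}$. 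But $[\rho^i]_{\a} = [1]_{\a}\cdot[\rho^i]_{\a}$, so $H_{\a} = \{[\rho^j]_{\a}: 0 \le j \le 5\}$ acts on this set the way $\langle \rho \rangle$ acts on itself by multiplication, which is (regular, hence) transitive. Combining, $(\Z[\rho]/(\a)) \rtimes H_{\a}$ is transitive on arcs.

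The argument is essentially routine once the action is written down correctly; the only point needing a little care is confirming that $H_{\a}$ genuinely has order $6$ (equivalently, that the six units $[\rho^i]_{\a}$ are pairwise distinct in $\Z[\rho]/(\a)$) rather than collapsing, which is guaranteed by $N(\a) \ge 7$ and was already established in the definition of $EJ_{\a}$. No serious obstacle is anticipated; the main thing is to state the semidirect-product structure and its action precisely so that the "automorphism" and "faithfulness" checks are clean. One could alternatively phrase the whole thing as an instance of the general fact that a Cayley graph $\Cay(K,S)$ on an abelian group $K$ is arc-transitive whenever $S$ is an orbit of a group of automorphisms of $K$ acting on $S$, applied with $K$ the additive group of $\Z[\rho]/(\a)$ and the automorphism group generated by multiplication by $[\rho]_{\a}$, but the direct verification above is self-contained.
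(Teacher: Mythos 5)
Your proposal is correct and follows essentially the same route as the paper: define the affine action of $(\Z[\rho]/(\a)) \rtimes H_{\a}$ on $\Z[\rho]/(\a)$, check that it is faithful and preserves adjacency and non-adjacency, and deduce arc-transitivity. The only cosmetic difference is the last step: the paper writes down an explicit group element carrying one given arc to another, whereas you factor the argument through vertex-transitivity of the translations plus transitivity of the stabiliser $H_{\a}$ of $[0]_{\a}$ on its six neighbours --- the two arguments are equivalent.
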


\begin{proof}  
$H_{\a}$ is a group under the multiplication of $\Z[\rho]/(\a)$. It can be verified that 
$(x+y\rho)^{\rho^j} = (x+y\rho)\rho^j$ 
defines an action of $H_{\a}$ on the additive group of $\Z[\rho]/(\a)$ (as a group \cite{Dixon-Mortimer}). Here and in the rest of this proof an EJ integer is interpreted as its residue class modulo $\a$. Thus the semidirect product $(\Z[\rho]/(\a)) \rtimes H_{\a}$ is well-defined. Moreover, it acts on $\Z[\rho]/(\a)$ (as a set) by
$(x+y\rho)^{(c+d\rho, \rho^j)} = ((x+c)+(y+d)\rho)\rho^j$
for $x+y\rho \in \Z[\rho]/(\a)$ and $(c+d\rho, \rho^j) \in (\Z[\rho]/(\a)) \rtimes H_{\a}$. It can be verified that this action is faithful, that is, the only element of $(\Z[\rho]/(\a)) \rtimes H_{\a}$ that fixes every $x+y\rho \in \Z[\rho]/(\a)$ is its identity element $(0, 1)$.
It can also be verified that $(\Z[\rho]/(\a)) \rtimes H_{\a}$ preserves adjacency and non-adjacency relations of $EJ_{\a}$. Hence $(\Z[\rho]/(\a)) \rtimes H_{\a}$ is isomorphic to a subgroup of the automorphism group of $EJ_{\a}$. 

Let $x_t + y_t \rho$ and $u_t + v_t \rho$ be adjacent in $EJ_{\a}$, $t = 1, 2$. Then $x_t + y_t \rho =  (u_t + v_t \rho) + \rho^{i_t}$ for some integer $i_t$. It is straightforward to verify that the element $((u_2 + v_2 \rho)\rho^{i_1 - i_2} - (u_1 + v_1 \rho), \rho^{i_2 - i_1})$ of $(\Z[\rho]/(\a)) \rtimes H_{\a}$ maps arc $(x_1 + y_1 \rho, u_1 + v_1 \rho)$ to arc $(x_2 + y_2 \rho, u_2 + v_2 \rho)$. Since this holds for any two arcs, $EJ_{\a}$ is $(\Z[\rho]/(\a)) \rtimes H_{\a}$-arc-transitive.
\qed
\end{proof}

\section{Covers and recursive constructions}
\label{sec:cover}

Let $\Ga_1$ and $\Ga_2$ be graphs. We say that $\Ga_1$ is a {\em cover} of $\Ga_2$ if there exists a surjective mapping $\phi: V(\Ga_1) \rightarrow V(\Ga_2)$ such that for each $u \in V(\Ga_1)$, the restriction of $\phi$ to the neighbourhood $N_{1}(u)$ of $u$ in $\Ga_1$ is a bijection from $N_{1}(u)$ to the neighbourhood $N_{2}(\phi(u))$ of $\phi(u)$ in $\Ga_2$. If in addition $k = |\phi^{-1}(v)|$ for all $v \in V(\Ga_2)$, then we say that $\Ga_1$ is a {\em $k$-fold cover} of $\Ga_2$.

Let $\Ga$ be a graph and $\cal P$ a partition of $V(\Ga)$. The {\em quotient graph} of $\Ga$ with respect to $\cal P$, $\Ga_{\cal P}$, is defined to have vertex set $\cal P$ such that $P_1, P_2 \in {\cal P}$ are adjacent if and only if there exists an edge of $\Ga$ joining a vertex of $P_1$ to a vertex of $P_2$. Let $G$ be a group of automorphisms of $\Ga$. If for any block $P \in {\cal P}$ and any $g \in G$ the image of $P$ under $g$ is also a block of $\cal P$, then $\cal P$ is called {\em $G$-invariant}. It can be verified that, if $\Ga$ is $G$-arc-transitive and $\cal P$ is $G$-invariant, then $\Ga_{\cal P}$ is also $G$-arc-transitive. 

\medskip
\textsf{A. Covering EJ graphs by 6-valent first-kind Frobenius circulants.}~~The following result together with its proof is similar to that of \cite[Lemma 8]{Z1}. 

\begin{thm}
\label{thm:cover}
Let $\a, \b \in \Z[\rho]$ be nonzero such that $N(\a) \ge 7$. Then $EJ_{\a\b}$ is an $N(\b)$-fold cover of $EJ_{\a}$ and can be constructed from $EJ_{\a}$. 
\end{thm}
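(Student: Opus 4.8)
The plan is to exhibit an explicit covering map from $EJ_{\a\b}$ onto $EJ_{\a}$ using the natural ring-theoretic surjection between quotient rings. First I would note that since $\a \mid \a\b$ in $\Z[\rho]$, the ideal $(\a\b)$ is contained in $(\a)$, so there is a canonical ring epimorphism $\phi: \Z[\rho]/(\a\b) \to \Z[\rho]/(\a)$ sending $[\eta]_{\a\b} \mapsto [\eta]_{\a}$. This $\phi$ is in particular a homomorphism of the additive groups, and its kernel $(\a)/(\a\b)$ has order $N(\a\b)/N(\a) = N(\b)$ (using multiplicativity of the norm and the fact that $|\Z[\rho]/(\gamma)| = N(\gamma)$). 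So $\phi$ is $N(\b)$-to-one onto $V(EJ_{\a})$.

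Next I would check that $\phi$ is a graph covering map in the sense defined just before the theorem. Because both graphs are Cayley graphs on their additive groups with the same symbol set $\{\pm 1, \pm \rho, \pm \rho^2\}$ (read modulo $\a\b$ and modulo $\a$ respectively), and $\phi$ sends $[\rho^j]_{\a\b}$ to $[\rho^j]_{\a}$, the neighbours of a vertex $[\eta]_{\a\b}$ are $[\eta + \rho^j]_{\a\b}$ and map under $\phi$ to $[\phi(\eta) + \rho^j]_{\a}$, which are exactly the neighbours of $\phi([\eta]_{\a\b})$. The remaining point is that the restriction of $\phi$ to the neighbourhood $N_1([\eta]_{\a\b})$ is a \emph{bijection} onto $N_2(\phi([\eta]_{\a\b}))$: surjectivity is immediate from the previous sentence, and injectivity reduces to showing that the six elements $\pm[1]_{\a\b},\pm[\rho]_{\a\b},\pm[\rho^2]_{\a\b}$ remain pairwise distinct modulo $\a$ and that no two of $[\eta+\rho^j]_{\a\b}$ collapse. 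Equivalently, one must verify that no difference of two distinct units $\rho^i - \rho^j$ lies in the ideal $(\a)$ — i.e. that $\a$ does not divide any of $2, 1-\rho, 1-\rho^2, \rho-\rho^2$ and their negatives/associates; since each such difference has norm $\le 3$ while $N(\a)\ge 7$, divisibility is impossible. This is exactly where the hypothesis $N(\a)\ge 7$ is used, mirroring the role it plays in the definition of $EJ_\a$.

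Finally, for the assertion that $EJ_{\a\b}$ "can be constructed from" $EJ_{\a}$, I would make this precise in the covering/voltage-graph language implicit in \S\ref{sec:cover}: realize $EJ_{\a\b}$ as the quotient graph of a suitable lift, or more directly, describe $V(EJ_{\a\b})$ as $V(EJ_{\a}) \times (\ker\phi)$ via any set-theoretic section $\sigma: \Z[\rho]/(\a) \to \Z[\rho]/(\a\b)$ of $\phi$, with adjacency between $(u, x)$ and $(u', x')$ determined by $u' - u = [\rho^j]_{\a}$ and $x' - x = \sigma(u)+[\rho^j]_{\a\b} - \sigma(u') + (\text{the }(\a)/(\a\b)\text{-correction})$ — that is, the edge voltages take values in the finite group $(\a)/(\a\b)\cong \Z[\rho]/(\b)$ and are computed from the chosen section. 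Since $EJ_{\a}$ is arc-transitive (Theorem \ref{thm:arc tran}) and the fibre partition $\{\phi^{-1}(v): v \in V(EJ_{\a})\}$ is invariant under the group $(\Z[\rho]/(\a\b))\rtimes H_{\a\b}$ acting on $EJ_{\a\b}$, the quotient is indeed $EJ_{\a}$, which also gives a clean conceptual proof that $\phi$ is a cover.

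The main obstacle I anticipate is not any single deep step but the bookkeeping in the injectivity-on-neighbourhoods check: one has to be careful that distinctness of the six symbols modulo $\a\b$ plus the norm bound genuinely forces distinctness modulo $\a$, and that the covering condition is verified at \emph{every} vertex, not just the identity. The "can be constructed from" clause is soft and will follow once the voltage-graph description is set up correctly; the cardinality count for the $N(\b)$-fold claim is routine given multiplicativity of the norm.
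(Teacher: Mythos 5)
Your proof is correct, and it takes a genuinely different route from the paper's for the covering claim. You work ``top-down'': you take the canonical projection $\phi:\Z[\rho]/(\alpha\beta)\to\Z[\rho]/(\alpha)$ and verify the covering axioms directly, with the only real content being that distinct units $\rho^i,\rho^j$ stay distinct modulo $\alpha$ because $N(\rho^i-\rho^j)\le 4 < 7\le N(\alpha)$ (note it is $4$, not $3$ as you wrote, because of the difference $1-(-1)=2$; this does not affect the argument). The paper instead works ``bottom-up'': it lifts each edge $[\xi]_\alpha\sim[\xi']_\alpha$ with $\xi-\xi'=\alpha\eta+\varepsilon$ to an explicit matching between the fibres $K+[\xi]_{\alpha\beta}$ and $K+[\xi']_{\alpha\beta}$, checks this is well defined and symmetric, and then identifies the resulting $6$-valent graph with $EJ_{\alpha\beta}$ by the observation that a $6$-valent spanning subgraph of a $6$-valent graph must be the whole graph. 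The paper's approach yields the explicit reconstruction procedure --- which is the substance of the ``can be constructed from'' clause --- as a by-product, whereas your verification of the cover property is shorter and cleaner but you then have to supply the construction separately via a section-and-voltage description, which you only sketch (the ``$(\alpha)/(\alpha\beta)$-correction'' would need to be written out, though this is routine). Your closing remark that invariance of the fibre partition ``gives a clean conceptual proof that $\phi$ is a cover'' overstates things slightly --- having $EJ_\alpha$ as a quotient does not by itself give local bijectivity --- but you have already proved that directly, so nothing is missing.
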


\begin{proof}
Let $K = ([\a]_{\a\b})$ be the principal ideal of $\Z[\rho]_{\a\b}$ induced by $[\a]_{\a\b}$. Since $\Z[\rho]$ is an Euclidean domain, its elements are of the form $\xi = \eta \b + \d$ with $\d=0$ or $N(\d) < N(\b)$. Hence $K=\{[\a\d]_{\a\b}: \d \in \Z[\rho],\, \d=0\;\mbox{or}\;N(\d) < N(\b)\}$. Since $K = (\a)/(\a\b)$, when it is viewed as a subgroup of the additive group of $\Z[\rho]_{\a\b}$, we have $\Z[\rho]_{\a} \cong \Z[\rho]_{\a\b}/K$ via the classical isomorphism $[\xi]_{\a} \mapsto K+[\xi]_{\a\b},\, [\xi]_{\a} \in \Z[\rho]_{\a}$. Hence $|K| = N(\a\b)/N(\a) = N(\b)$.

Now we construct a graph $\hat{EJ}_{\a\b}$ with vertex set $\Z[\rho]_{\a\b}$. Consider an arbitrary pair of adjacent vertices $[\xi]_{\a}, [\xi']_{\a}$ of $EJ_{\a}$. By the definition of $EJ_{\a}$, there exist $\eta \in \Z[\rho]$ and a unit $\ve$ of $\Z[\rho]$, both relying on $\xi$ and $\xi'$, such that $\xi - \xi' = \a \eta + \ve$. Construct $\hat{EJ}_{\a\b}$ in such a way that 
\bea
\mbox{each $[\a\d+\xi]_{\a\b} \in K+[\xi]_{\a\b}$ is adjacent to $[\a\d+\xi-\ve]_{\a\b} = [\a(\d+\eta)+\xi']_{\a\b} \in K+[\xi']_{\a\b}$} \non \\ [0.15cm]
\mbox{but not any other element in $K+[\xi']_{\a\b}$.} \label{eq:const}
\eea
This adjacency relation is defined for all pairs of adjacent vertices $[\xi]_{\a}, [\xi']_{\a}$ of $EJ_{\a}$. Since $\xi' - \xi = -\a \eta - \ve$, when interchanging the roles of $[\xi]_{\a}$ and $[\xi']_{\a}$ in (\ref{eq:const}), we obtain that $[\a\d+\xi-\ve]_{\a\b} = [\a(\d+\eta)+\xi']_{\a\b}$ is adjacent to $[\a(\d+\eta)+\xi'+\ve]_{\a\b}=[\a\d+\xi]_{\a\b}$ in $\hat{EJ}_{\a\b}$. Hence the adjacency relation (\ref{eq:const}) is symmetric. Moreover, it is independent of the choice of representatives of $[\xi]_{\a}$ and $[\a\d+\xi]_{\a\b}$. In fact, if $[\a\d_1+\xi_1]_{\a\b} = [\a\d+\xi]_{\a\b}$ (which implies $[\xi_1]_{\a} = [\xi]_{\a}$), then $\xi_1 = \xi + \a(\s \b+\d-\d_1)$ for some $\s \in \Z[\rho]$ and hence $\xi_1 -\xi' = \a(\s \b+\d-\d_1+\eta)+\ve$. Thus, by (\ref{eq:const}), $[\a\d_1+\xi_1]_{\a\b} \in K+[\xi]_{\a\b}$ is adjacent to $[\a(\d_1+(\s \b+\d-\d_1 + \eta))+\xi']_{\a\b}=[\a(\d+\eta)+\xi']_{\a\b} \in K+[\xi']_{\a\b}$, which agrees with (\ref{eq:const}) applied to $[\a\d+\xi]_{\a\b}$. Therefore, $\hat{EJ}_{\a\b}$ is well-defined as an undirected graph. Since $EJ_{\a}$ is 6-valent, by the above construction, $\hat{EJ}_{\a\b}$ is 6-valent as well.  

Using the notation above, by the definition of $EJ_{\a\b}$, $[\a\d+\xi]_{\a\b}$ and $[\a\d+\xi-\ve]_{\a\b}$ are clearly adjacent in $EJ_{\a\b}$. Thus, by (\ref{eq:const}), if two vertices are adjacent in $\hat{EJ}_{\a\b}$, then they are adjacent in $EJ_{\a\b}$. This implies that $\hat{EJ}_{\a\b}$ is a spanning subgraph of $EJ_{\a\b}$. Since both graphs are 6-valent, it follows that they must be identical. 
Therefore, $EJ_{\a\b}$ can be constructed from $EJ_{\a}$ as in the previous paragraph. It is obvious that the quotient graph of $EJ_{\a\b}$ with respect to the partition $\Z[\rho]_{\a\b}/K$ of $\Z[\rho]_{\a\b}$ is isomorphic to $EJ_{\a}$, and moreover $EJ_{\a\b}$ is an $N(\b)$-fold cover of $EJ_{\a}$.
\qed
\end{proof}  

Two elements $\a, \b \in \Z[\rho]$ are said to be {\em associates} if $\a = \b \rho^j$ for some integer $j$. 

\begin{cor}
\label{cor:cover}
Let $\a = c+d\rho \in \Z[\rho]$ with $7 \le N(\a) \equiv 1~\mod 6$ that is not an associate of any real integer. Denote $\ell = \gcd(c, d)$, $c' = c/\ell$, $d' = d/\ell$ and $\a' = c' + d' \rho$. Then $EJ_{\a}$ is an $\ell^2$-fold cover of a 6-valent first-kind Frobenius circulant that is isomorphic to $EJ_{\a'}$.
\end{cor}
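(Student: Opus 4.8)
The plan is to derive Corollary \ref{cor:cover} directly from Theorems \ref{thm:EJ} and \ref{thm:cover}, taking the real integer $\ell$ as the multiplier $\b$ in Theorem \ref{thm:cover}. First I would record that, by construction, $\a = \ell\a'$ in $\Z[\rho]$, where $\a' = c'+d'\rho$ has coprime coordinates $\gcd(c',d')=1$; and since the norm on $\Z[\rho]$ is multiplicative and $N(\ell) = \ell^2$ for a real integer $\ell$, we get $N(\a) = N(\ell)\,N(\a') = \ell^2 N(\a')$.

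The next step is to check the two numerical hypotheses on $\a'$ needed to invoke the earlier theorems, namely $N(\a') \equiv 1~\mod 6$ and $N(\a') \ge 7$. For the congruence: since $N(\a) \equiv 1~\mod 6$, the integer $N(\a)$ is coprime to $2$ and to $3$, hence so is its divisor $\ell$; therefore $\ell \equiv \pm 1~\mod 6$, so $\ell^2 \equiv 1~\mod 6$, and from $N(\a) = \ell^2 N(\a')$ we obtain $N(\a') \equiv N(\a) \equiv 1~\mod 6$. For the norm bound: if $N(\a') = 1$ then $\a'$ is a unit of $\Z[\rho]$, i.e. $\a' = \rho^j$ for some integer $j$, so that $\a = \ell\rho^j$ would be an associate of the real integer $\ell$, contradicting the hypothesis; since $N(\a')>1$ and $N(\a') \equiv 1~\mod 6$, the smallest admissible value forces $N(\a') \ge 7$.

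With these verified, Theorem \ref{thm:EJ}(b) applies to $\a'$ (coprime coordinates, norm at least $7$, norm $\equiv 1~\mod 6$), so $EJ_{\a'}$ is isomorphic to a 6-valent first-kind Frobenius circulant. Finally I would apply Theorem \ref{thm:cover} with base element $\a'$ and multiplier $\b = \ell$: since $N(\a') \ge 7$, the graph $EJ_{\a'\ell} = EJ_{\a}$ is an $N(\ell)$-fold cover of $EJ_{\a'}$ and can be constructed from it, and $N(\ell) = \ell^2$. Combining the two conclusions gives that $EJ_{\a}$ is an $\ell^2$-fold cover of $EJ_{\a'}$, which is a 6-valent first-kind Frobenius circulant, exactly as claimed.

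I do not expect a serious obstacle here: the argument is essentially bookkeeping with multiplicativity of the norm together with the congruence $\ell^2 \equiv 1~\mod 6$. The one place that genuinely uses the standing hypothesis is the step $N(\a') \ge 7$, where "not an associate of a real integer" is invoked precisely to rule out the degenerate case $\a' = \rho^j$; if one only assumed $c,d \ne 0$ one would instead argue that $\a' = \rho^j$ is impossible since $\rho^j$ has a zero coordinate while $\a'$ does not. The rest is a mechanical assembly of Theorems \ref{thm:EJ} and \ref{thm:cover}.
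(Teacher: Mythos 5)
Your proposal is correct and follows essentially the same route as the paper: verify $N(\a')\equiv 1~\mathrm{mod}~6$ and $N(\a')\ge 7$ (using the non-associate hypothesis to exclude $\a'$ being a unit), then apply Theorem \ref{thm:EJ}(b) to $\a'$ and Theorem \ref{thm:cover} with $\b=\ell$. The only cosmetic difference is that the paper derives $\ell^2\equiv 1~\mathrm{mod}~6$ by ruling out the cases $\ell^2\equiv 3,4~\mathrm{mod}~6$, while you argue directly from $\gcd(\ell,6)=1$; both are fine.
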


\begin{proof}
We have $N(\a) = \ell^2 N(\a')$ and $\ell^2 \equiv 1, 3$ or $4~\mod 6$ as $\ell \ne 0$.   
If $\ell^2 \equiv 3~\mod 6$, then $N(\a) \equiv 0$ or $3~\mod 6$, a contradiction. If $\ell^2 \equiv 4~\mod 6$, then $N(\a) \equiv 0, 2$ or $4~\mod 6$, a contradiction again. So we must have $\ell^2 \equiv 1~\mod 6$ and consequently $N(\a') \equiv 1~\mod 6$. We have $N(\a') \ge 7$, for otherwise $\a$ is an associate of the integer $\ell$, a contradiction. Similarly, we have $c \ne 0$ and $d \ne 0$. Since $\gcd(c', d') = 1$, by Theorem \ref{thm:EJ} it follows that $EJ_{\a'}$ is isomorphic to a 6-valent Frobenius circulant. Since $\a = \ell \a'$ and $N(\ell) = \ell^2$, by Theorem \ref{thm:cover} $EJ_{\a}$ is an $\ell^2$-fold cover of $EJ_{\a'}$. 
\qed
\end{proof}

\medskip
\textsf{B. Covering 6-valent first-kind Frobenius circulants.}~~Let $n \ge 7$ be an integer and $m > 1$ a divisor of $n$. Let 
$$
K(m) = \{[km]: 0 \le k \le n/m - 1\}
$$
be the subgroup of the additive group $(\Z_{n}, +)$ generated by $[m]$. Let
$$
{\cal P}(m) = \Z_{n}/K(m) = \{K(m) + [j]: 0 \le j \le m - 1\} \cong \Z_m
$$ 
be the quotient group of $(\Z_n, +)$ by $K(m)$. We may also view ${\cal P}(m)$ as a partition of $\Z_n$. 

The following result states that any 6-valent first-kind Frobenius circulant is a cover of and can be constructed from its proper `quotient' 6-valent first-kind Frobenius circulants. 

\begin{thm}
\label{thm:quo}
Let $n \ge 7$ be an integer all of whose prime factors are congruent to $1$ modulo $6$. Let $a$ be a solution to (\ref{eq:3rd}) and $H$ be as in (\ref{eq:H}), so that $\Ga = TL_{n}(a, a-1,1)$ is a 6-valent first-kind Frobenius circulant of order $n$. Then for every proper divisor $m$ of $n$, the quotient graph of $\Ga$ with respect to the partition ${\cal P}(m)$ is isomorphic to a 6-valent first-kind Frobenius circulant of order $m$, namely $\Ga(m) = TL_{m}(a_m, a_m - 1, 1)$, where $a_m$ is a solution to $x^2 - x +1 \equiv 0~\mod m$. Moreover, $\Ga$ is an $n/m$-fold cover of $\Ga(m)$.
\end{thm}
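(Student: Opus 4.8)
The plan is to produce, from the solution $a$ to $x^2-x+1\equiv 0\pmod n$, a solution $a_m$ to the analogous congruence modulo $m$, and then to identify the quotient graph $\Ga_{{\cal P}(m)}$ with $TL_m(a_m,a_m-1,1)$ directly from the definitions. First I would observe that since every prime factor of $n$ is $\equiv 1\pmod 6$, so is every prime factor of the proper divisor $m$, hence $m\equiv 1\pmod 6$; and reducing the congruence $a^2-a+1\equiv 0\pmod n$ modulo $m$ shows that $a_m:=a\bmod m$ satisfies $x^2-x+1\equiv 0\pmod m$. By Theorem~\ref{existence} applied to $m$ (whose prime factors are all $\equiv 1\pmod6$, so \eqref{eq:3rd} is solvable mod $m$), $\Ga(m)=TL_m(a_m,a_m-1,1)$ is a well-defined $6$-valent first-kind Frobenius circulant of order $m$, with connection set $S_m=\{\pm[1],\pm[a_m],\pm[a_m-1]\}$ in $\Z_m$.

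Next I would make the quotient identification explicit. The natural surjection $\phi\colon\Z_n\to\Z_m$, $[x]_n\mapsto[x]_m$, is a group epimorphism with kernel exactly $K(m)$, so its fibres are precisely the blocks of ${\cal P}(m)$; thus $V(\Ga_{{\cal P}(m)})$ is in bijection with $\Z_m$ via $\phi$. Under this identification two blocks $K(m)+[i]$ and $K(m)+[j]$ are adjacent in $\Ga_{{\cal P}(m)}$ iff some representatives differ (in $\Z_n$) by an element of the connection set $H=\{\pm[1],\pm[a],\pm[a^2]\}$ of $\Ga$, i.e. iff $[i-j]_m\in\phi(H)=\{\pm[1],\pm[a_m],\pm[a_m^2]\}=S_m$ in $\Z_m$. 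Since $a_m$ is a solution to \eqref{eq:3rd} mod $m$, part~(d) of Theorem~\ref{existence} gives $a_m^2\equiv a_m-1\pmod m$, so $\phi(H)$ is exactly the connection set of $\Ga(m)$. Hence $\Ga_{{\cal P}(m)}\cong TL_m(a_m,a_m-1,1)=\Ga(m)$.

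Finally, for the covering statement I would check the local-bijection condition. A vertex $[x]_n$ of $\Ga$ has neighbourhood $[x]_n+H$; its image under $\phi$ is $[x]_m$, with neighbourhood $[x]_m+S_m$ in $\Ga(m)$. Because $\phi$ restricted to the coset $[x]_n+H$ maps onto $[x]_m+\phi(H)=[x]_m+S_m$, it remains to see this restriction is injective, equivalently that the six elements of $H$ are pairwise incongruent modulo $m$. This is where the main care is needed: one must rule out, say, $[1]\equiv[a]\pmod m$, $[1]\equiv[a^2]\pmod m$, or $[a]\equiv[a^2]\pmod m$, and their $\pm$ variants. But each such coincidence would force $m\mid a-1$, $m\mid a^2-1=(a-1)(a+1)$, $m\mid a^2-a=a(a-1)$, $m\mid a+1$, $m\mid a^2+1$, or $m\mid a^2+a$; and by Remark~\ref{rem:prac}(b) together with part~(d) of Theorem~\ref{existence} we have $\gcd(a,n)=\gcd(a-1,n)=\gcd(a+1,n)=\gcd(a^2+1,n)=1$ (the last since $a^2+1\equiv a\pmod n$), and $m\mid n$, so each $\gcd$ with $m$ is $1$ as well, while $m>1$ — a contradiction in every case. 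Hence $\phi$ restricts to a bijection from each neighbourhood of $\Ga$ onto the corresponding neighbourhood of $\Ga(m)$, so $\Ga$ is a cover of $\Ga(m)$; and since every fibre $\phi^{-1}([j]_m)=K(m)+[j]$ has size $|K(m)|=n/m$, it is an $n/m$-fold cover, completing the proof. The only genuinely delicate point is this injectivity-of-the-fibre-map step, which I expect to be the main obstacle — though as just indicated it reduces cleanly to the coprimality facts already recorded for $a$.
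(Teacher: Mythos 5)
Your proposal is correct and follows essentially the same route as the paper: the paper likewise sends $K(m)+[j]\mapsto [j]_m$ with $a_m\equiv a\pmod m$, and establishes the $n/m$-fold covering by showing no two elements of $H$ can lie in the same coset of $K(m)$, using exactly the coprimality facts of Remark~\ref{rem:prac}(b) (its list $\pm[1],\pm[2],\pm[a],\pm[2a],\pm[a-1],\pm[a+1],\pm[a-2],\pm[2a-1],\pm[2(a-1)]$ is precisely your set of pairwise differences of $H$ reduced via $a^2\equiv a-1$). The only cosmetic omission is that you do not explicitly list the cases $h\equiv -h\pmod m$, i.e.\ $m\mid 2$, $m\mid 2a$, $m\mid 2a^2$, but these are immediate since $n$ is odd and $\gcd(a,n)=1$.
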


\begin{proof}
Denote $H/K(m) = \{K(m)+[1], K(m)-[1], K(m)+[a], K(m)-[a], K(m)+[a-1], K(m)-[a-1]\}$. Then $-H/K(m) = H/K(m)$ and $K(m) \not \in H/K(m)$ as $1, a, a-1$ are all coprime to $n$ (Remark \ref{rem:prac}(b)). Hence $\Cay({\cal P}(m), H/K(m))$ is a well-defined Cayley graph. It is readily seen that $\Ga_{{\cal P}(m)} \cong \Cay({\cal P}(m), H/K(m))$. Since ${\cal P}(m)$ is induced by the normal subgroup $K(m)$ of $\Z_n$, $\Ga$ must be a multicover of $\Ga_{{\cal P}(m)}$; that is, for $K(m) + [j_1], K(m) + [j_2] \in {\cal P}(m)$ adjacent in $\Ga_{{\cal P}(m)}$, every $[km+j_1] \in K(m) + [j_1]$ has the same number of neighbours in $K(m) + [j_2]$. Suppose $[j_1]$ is adjacent to distinct $[km+j_2], [k'm+j_2] \in K(m) + [j_2]$. Then $[km+j_2-j_1], [k'm+j_2-j_1] \in H$ and so $[(k-k')m]$ is equal to one of $\pm [1], \pm [2], \pm [a], \pm [2a], \pm [a-1], \pm [a+1], \pm [a-2], \pm [2a-1]$ and $\pm [2(a-1)]$. However, this is impossible because by Remark \ref{rem:prac}(b) all these numbers are coprime to $n$ and hence to $m$. This contradiction shows that every vertex in $K(m) + [j_1]$ has exactly one neighbour in $K(m) + [j_2]$. Therefore, $\Ga$ is an $n/m$-fold cover of $\Ga(m)$. 

Let $a \equiv a_m~(\mod m)$, where $1 \le a_m \le m-1$. (Note that $a_m \ne 0$ as $a$ and $m$ are coprime.) Since $a$ satisfies (\ref{eq:3rd}) and $m$ divides $n$, $a_m$ is a solution to $x^2 - x + 1 \equiv 0~(\mod m)$. Let $H(m) = \la [a]_{m} \ra \le \Z_{m}^*$, where $[x]_{m}$ denotes the residue class of $x$ modulo $m$. By Theorem \ref{existence}, $\Ga(m) = TL_{m}(a_m, a_m - 1, 1)$ is a 6-valent first-kind Frobenius circulant produced by $H(m)$. It is straightforward to verify that 
${\cal P}(m) \rightarrow \Z_{m},\;\,K(m) + [j] \mapsto [j]_{m},\;\,0 \le j \le m - 1$
is an isomorphism from $\Ga_{{\cal P}(m)}$ to $\Ga(m)$. 
\qed
\delete
{
One can see that 
$$
\phi: {\cal P}(m) \rightarrow \Z_{m},\;\,K(m) + [j] \mapsto [j]_{m},\;\,0 \le j \le m - 1
$$
is a bijection from ${\cal P}(m)$ to $\Z_{m}$. We now prove that $\phi$ is an isomorphism from $\Ga_{{\cal P}(m)}$ to $\Ga(m)$. If $K(m)+[j_1]$ and $K(m)+[j_2]$ are adjacent in $\Ga_{{\cal P}(m)}$, then $[j_1 - j_2] = [km] +[x]$ for some $k$ and $[x] \in H$. Since $a \equiv a_m~(\mod m)$, by the definition of $H(m)$, we have $x \equiv x(m)~(\mod m)$ for some $[x(m)]_{m} \in H(m)$. Hence $[j_1 - j_2]_{m} = [x(m)]_{m} \in H(m)$, and so $[j_1]_{m}$ and $[j_2]_{m}$ are adjacent in $\Ga(m)$. Conversely, if $[j_1]_{m}$ and $[j_2]_{m}$ are adjacent in $\Ga(m)$, then $[j_1 - j_2]_{m} \in H(m)$ and so $j_1 - j_2 \equiv x(m)~(\mod m)$ for some $[x(m)]_{m} \in H(m)$. By the definition of $H(m)$, $x \equiv x(m)~(\mod m)$ for some $[x] \in H$. Hence $[j_1 - j_2] \in K(m) + [x]$ and so $[j_1]$ and $[j_2]$ are adjacent in $\Ga_{{\cal P}(m)}$. Therefore, $\Ga_{{\cal P}(m)} \cong \Ga(m)$ via $\phi$.
}
\end{proof}

\delete
{

Let $n = p_1^{e_1}p_2^{e_2} \cdots p_l^{e_l}$ be an integer in prime factorization, where each $p_i \equiv 1~\mod 6$. For $1 \le i \le l, 1 \le s_i \le e_i$, let 
$$
K(s_i) = \{[up_i^{s_i}]: 0 \le u \le n/p_i^{s_i} - 1\} 
$$
be the subgroup of the additive group $\Z_{n}$ generated by $[p_i^{s_i}]$, and let
$$
{\cal P}(s_i) = [\Z_{n}:K(s_i)] = \{K(s_i) + [y]: 0 \le y \le p_i^{s_i} - 1\}
$$ 
be the set of cosets of $K(s_i)$ in $\Z_n$. By Corollary \ref{cor:unique}, there is a unique 6-valent first-kind Frobenius circulant $TL_{p_i^{s_i}}(a_{s_i}, a_{s_i}-1, 1)$ of order $p_i^{s_i}$, where $a_{s_i}$ is a solution to $x^2-x+1 \equiv 0~\mod p_i^{s_i}$. Let $a$ be an arbitrary solution to (\ref{eq:3rd}) and $H$ be as defined in (\ref{eq:H}), so that $TL_{n}(a, a-1, 1)$ is a 6-valent first-kind Frobenius circulant. 

\begin{thm}
\label{thm:cover1}
Under the assumption above, ${\cal P}(s_i)$ is a $\Z_{n} \rtimes H$-invariant partition of $\Z_{n}$, and the quotient graph of $\Ga = TL_{n}(a, a-1, 1)$ with respect to ${\cal P}(s_i)$ is isomorphic to $\Ga(p_i^{s_i}) = TL_{p_i^{s_i}}(a_{s_i}, a_{s_i}-1, 1)$. Moreover, $\Ga$ is a $(n/p_i^{s_i})$-fold cover of $\Ga(p_i^{s_i})$.
\end{thm}

\begin{proof}
Denote $G =  \Z_{n} \rtimes H$. 

(i) Let $([x], [h]) \in G$ and $K(s_i)+[y] \in {\cal P}(s_i)$. Then $(K(s_i)+[y])^{([x], [h])} = \{[up_i^{s_i}+y]^{([x], [h])}: 0 \le u \le n/p_i^{s_i} - 1\} = \{[uhp_i^{s_i}] + [(x+y)h]: 0 \le u \le n/p_i^{s_i} - 1\} = K(s_i) + [(x+y)h] \in {\cal P}(s_i)$, because $[h] \in \Z_{n}^*$ and so $[uhp_i^{s_i}]$ runs over all elements of $K(s_i)$ when $u$ ranges from $0$ to $n/p_i^{s_i}-1$. So ${\cal P}(s_i)$ is a $G$-invariant partition of $\Z_{n}$. Since $\Ga$ is $G$-arc-transitive (Theorem \ref{existence}), the quotient graph $\Ga_{{\cal P}(s_i)}$ of $\Ga$ with respect to ${\cal P}(s_i)$ is $G$-arc-transitive under the induced action of $G$ on ${\cal P}(s_i)$. 

(ii) Next we prove that $\Ga$ is a $(n/p_i^{s_i})$-fold cover of $\Ga_{{\cal P}(s_i)}$.
Write $a = u_0 p_i^{s_i} + y_0$, where $0 \le y_0 \le p_i^{s_i} - 1$, so that $[a] \in K(s_i) + [y_0]$. Since by Remark \ref{rem:prac}(b), $a, a-1, a+1$ and $a-2$ are all coprime to $p_i$, we have $3 \le y_0 \le p_i^{s_i} - 2$. Thus, $-[a] \in K(s_i) + [p_i^{s_i} - y_0]$, $[a-1] \in K(s_i) + [y_0 - 1]$ and $-[a-1] \in K(s_i) + [p_i^{s_i} - y_0 + 1]$. Note that $y_0, p_i^{s_i} - y_0, y_0 - 1, p_i^{s_i} - y_0 + 1$ are all between $2$ and $p_i^{s_i} - 2$, and they are pairwise distinct. (For example, if $y_0 = p_i^{s_i} - y_0 + 1$, then $y_0 = (p_i^{s_i}+1)/2$ and so $a = u_0 p_i^{s_i} + (p_i^{s_i}+1)/2$. Since $a$ satisfies (\ref{eq:3rd}), we derive that $4n$ and hence $p_i$ is a divisor of $4u_0(u_0+1)p_i^{2s_i} + p_i^{2s_i} +3$, which is impossible as $p_i$ does not divide $3$.) Therefore, the neighbours $[1], -[1], [a], -[a], [a-1], -[a-1]$ of $[0] \in K(s_i)$ in $\Ga$ are in different blocks of ${\cal P}(s_i)$, namely $K(s_i) + [1], K(s_i) + [p_i^{s_i}-1], K(s_i) + [y_0], K(s_i) + [p_i^{s_i}-y_0], K(s_i) + [y_0 - 1], K(s_i) + [p_i^{s_i} - y_0 + 1]$, respectively. Moreover, the induced subgraph of $\Ga$ between $K(s_i)$ and each of these six blocks is a perfect matching. (For example, the perfect matching between $K(s_i)$ and $K(s_i) + [y_0]$ consists of the edges joining $[up_i^{s_i}]$ to $[(u+u_0)p_i^{s_i}+y_0]$, $0 \le u \le n/p_i^{s_i}-1$.) However, $\Ga$ is $G$-arc-transitive and ${\cal P}(s_i)$ is a $G$-invariant partition of the vertex set of $\Ga$. Therefore, $\Ga$ must be a $(n/p_i^{s_i})$-fold cover of $\Ga_{{\cal P}(s_i)}$. Consequently, we have: $K(s_i)+[y_1]$ and $K(s_i)+[y_2]$ are adjacent in $\Ga_{{\cal P}(s_i)}$ $\Leftrightarrow$ $[y_2]$ is adjacent to some $[up_i^{s_i}] +[y_1] \in K(s_i)+[y_1]$ $\Leftrightarrow$ $[up_i^{s_i}] +[y_1 - y_2] \in H$ $\Leftrightarrow$ $[y_1 - y_2] \in \cup_{[h] \in H}(K(s_i) + [h])$.

(iii) Finally, we prove $\Ga_{{\cal P}(s_i)} \cong \Ga(p_i^{s_i})$.
Since $a = u_0 p_i^{s_i} + y_0$ satisfies (\ref{eq:3rd}), $y_0$ is a solution to $x^2 - x + 1 \equiv 0~\mod p_i^{s_i}$. This congruence equation has exactly two solutions (see e.g.~\cite[Theorem 2.20]{NZ}), namely $a_{s_i}$ and $-a_{s_i}^2$, and both produce $\Ga(p_i^{s_i})$ corresponding to $H(s_i) = \la [a_{s_i}]_{*} \ra \le \Z_{p_i^{s_i}}^*$, where $[x]_{*}$ denotes the residue class of $x$ modulo $p_i^{s_i}$. Thus without loss of generality we may assume $y_0 = a_{s_i}$. It is ready to see that 
$$
\phi: {\cal P}(s_i) \rightarrow \Z_{p_i^{s_i}},\;\,K(s_i) + [y] \mapsto [y]_{*},\;\,0 \le y \le p_i^{s_i} - 1
$$
defines a bijection from ${\cal P}(s_i)$ to $\Z_{p_i^{s_i}}$. If $K(s_i)+[y_1]$ and $K(s_i)+[y_2]$ are adjacent in $\Ga_{{\cal P}(s_i)}$, then from the discussion above we have $[y_1 - y_2] = [up_i^{s_i}] +[h]$ for some $u$ and $[h] \in H$. Since $a \equiv a_{s_i}~\mod p_i^{s_i}$, by the definition of $H(s_i)$, we have $h \equiv a_{s_i}^j~\mod p_i^{s_i}$ for some $[a_{s_i}^j]_{*} \in H(s_i)$. Hence $[y_1 - y_2]_{*} = [a_{s_i}^j]_{*} \in H(s_i)$, and so $[y_1]_{*}$ and $[y_2]_{*}$ are adjacent in $\Ga(p_i^{s_i})$. Conversely, if $[y_1]_{*}$ and $[y_2]_{*}$ are adjacent in $\Ga(p_i^{s_i})$, then $y_1 - y_2 \equiv a_{s_i}^j~\mod p_i^{s_i}$ for some $[a_{s_i}^j]_{*} \in H(s_i)$. By the definition of $H(s_i)$, there exists $[h] \in H$ such that $h \equiv a_{s_i}^j~\mod p_i^{s_i}$. Hence $[y_1 - y_2] \in K(s_i) + [h]$ and so $[y_1]$ and $[y_2]$ are adjacent in $\Ga_{{\cal P}(s_i)}$. Therefore, $\phi$ is an isomorphism from $\Ga_{{\cal P}(s_i)}$ to $\Ga(p_i^{s_i})$. This together with the result in (ii)  implies that $\Ga$ is a $(n/p_i^{s_i})$-fold cover of $\Ga(p_i^{s_i})$.
\end{proof}
}

\section{Gossiping, routing and Wiener index}
\label{sec:rg}

In this and the next sections we study gossiping (all-to-all communication), routing and broadcasting (one-to-all communication) problems for 6-valent first-kind Frobenius circulants. We will present our results in terms of such graphs, but in view of Theorem \ref{thm:EJ} the same results can also be stated in terms of EJ graphs $EJ_{c+d\rho}$ with $\gcd(c,d) = 1$ and $N(c+d\rho) \equiv 1~\mod 6$. At present we do not know whether the same results hold for arbitrary EJ graphs since our proofs rely on properties of Frobenius groups.  

\textsf{A. Routing and gossiping.}~~
A {\em routing} of a connected graph $\Ga=(V,E)$ is a set of oriented paths, one for each ordered pair of vertices. The {\em load of an edge} with respect to a routing is the number of times it is traversed by such paths in either direction; the {\em load of a routing} is the maximum load on an edge; and the {\em edge-forward index} $\pi(\Ga)$ is \cite{HMS} the minimum load over all possible routings of $\Ga$. The {\em arc-forwarding index} $\overrightarrow{\pi}(\Ga)$ is defined \cite{Hey} similarly by taking the direction into account when counting the number of times an arc is traversed. (Recall that an arc is an ordered pair of adjacent vertices.) A routing is a {\em shortest path routing} if all paths used are shortest paths. The {\em minimal edge-} and {\em arc-forwarding indices} \cite{Hey},  $\pi_m(\Ga)$, $\overrightarrow{\pi}_m(\Ga)$, are defined by restricting to shortest path routings in the definitions of $\pi$ and $\overrightarrow{\pi}$, respectively. It is easy to see (e.g. \cite[Theorem 3.2]{HMS}) that 
\begin{equation}
\label{eq:4ineq}
\pi_m \ge \pi \ge \frac{\sum_{(u,v)\in V \times V} d(u,v)}{|E|},\quad \overrightarrow{\pi}_m \ge \overrightarrow{\pi} \ge \frac{\sum_{(u,v)\in V \times V} d(u,v)}{2|E|},
\end{equation}
where $d(u, v)$ is the distance between $u$ and $v$ in the graph. 

An information dissemination process such that each vertex has a distinct message to be sent to all other vertices is called {\em gossiping} (all-to-all communication). We consider the {\em store-and-forward, all-port and full-duplex} model \cite{BKP}: a vertex must receive a message wholly before retransmitting it to other vertices; a vertex can exchange messages (which may be different) with all of its neighbours at each time step; messages can traverse an edge in both directions simultaneously; no two messages can transmit over the same arc at the same time; and it takes one time step to transmit any message over an arc. A \emph{gossiping scheme} is a procedure fulfilling the gossiping under these constraints, and the {\em minimum gossip time} \cite{BKP} of a graph $\Ga$, denoted by $t(\Ga)$, is the minimum number of time steps required by such a scheme. Clearly, if $\Ga$ has minimum valency $\d$, then \cite{BKP}
\begin{equation}
\label{eq:t}
t(\Ga) \ge \frac{|V|-1}{\d}.
\end{equation}

\medskip
\textsf{B. Computing forwarding indices and minimum gossip time.}~~Given a first-kind $K \rtimes H$-Frobenius graph with diameter $D$, the set of vertices at distance $t$ from the identity element of $K$ is a union of $H$-orbits on $K$, $1 \le t \le D$. Denote by $n_t$ the number of such $H$-orbits, and call $(n_1, \ldots, n_D)$ the {\em type} \cite{FLP} of the graph.

In the remainder of this section, we use
\begin{equation}
\label{eq:Gamma}
\Ga = TL_{n}(a,a-1,1)
\end{equation}
to denote a 6-valent first-kind Frobenius circulant, where each prime factor of $n \ge 7$ is congruent to 1 modulo 6 and $a$ is a solution to (\ref{eq:3rd}). Let $D=\diam(\Ga)$ be the diameter of $\Ga$ and $\Ga_{t}[0]$ the set of vertices of $\Ga$ distant $t$ apart from $[0]$, $1 \le t \le D$. Then $\Ga_{t}[0]$ has size $W_{t}(\Ga)$.
Theorems \ref{thm:EJ} and \ref{thm:fb} together enable us to compute the type $(n_1, \ldots, n_D)$ of $\Ga$ in the following way. First, we work out $\a = c+d\rho$ such that $\Ga \cong EJ_{c+d\rho}$ by using Theorem \ref{thm:EJ}. Multiplying $\a$ by an appropriate $\rho^j$ and/or interchanging $c$ and $d$ when necessary, we may assume $c \ge d \ge 0$. (See the paragraph right after Theorem \ref{thm:fb}.) Since each $\Ga_{t}[0]$ is the union of $n_t$ $H$-orbits, where $H = \la [a] \ra$ as in (\ref{eq:H}), we have $W_t(\Ga) = 6n_t$ and in particular $W_t(\Ga) \ne 2$ (hence $c \not \equiv d~\mod 3$). Thus, by Theorem \ref{thm:fb}, $D = \lfloor (2c+d)/3\rfloor$ and 
\begin{equation}
\label{eq:nt}
n_t = \left\{ 
\begin{array}{ll}
t, & 1 \le t < (c+d)/2\\[0.2cm]
(2c+d)-3t, & (c+d)/2 < t \le D.
\end{array} 
\right.
\end{equation}
In addition, if $c+d=2t^*$ is even (which can happen as seen in Example \ref{ex:6g}), then 
\begin{equation}
\label{eq:nt*}
6n_{t^*} = n - 1 - 6\sum_{t \ne t^*} n_t = n-1-6D(2c+d)+9D(D+1)+3(c-1)(c+d).
\end{equation}
It is known that, for any first-kind Frobenius graph, we have $\pi = 2\overrightarrow{\pi} = 2\overrightarrow{\pi}_m = \pi_m = 2\sum_{i=0}^D t n_t$ and these achieve the trivial lower bounds in (\ref{eq:4ineq}) (see \cite[Theorem 1.6]{FLP} and \cite[Theorem 6.1]{Z}).  
Using this and (\ref{eq:nt})--(\ref{eq:nt*}), we can give an explicit formula for $\pi(\Ga) = 2\overrightarrow{\pi}(\Ga) = 2\overrightarrow{\pi}_m(\Ga) = \pi_m(\Ga)$. If $c+d$ is odd, this quantity is equal to
\begin{equation}
\label{eq:odd}
D(D+1)[(2c+d)-(2D+1)] - \frac{1}{12}(2c-d)[(c+d)^2-1];
\end{equation}
if $c+d$ is even, it is equal to 
\begin{equation}
\label{eq:even}
\frac{1}{2}D(D+1)[(7c+5d)-2(2D+1)] + \frac{1}{12}(c+d)^2(4c+d-6)+\frac{1}{6}[n-3c-5-6D(2c+d)].
\end{equation}
 
It is known \cite[Theorem 5.1]{Z} that the minimum gossip time of any first-kind Frobenius graph achieves the trivial lower bound (\ref{eq:t}). This yields, for $\Gamma$ in (\ref{eq:Gamma}), 
\begin{equation}
\label{eq:t1}
t(\Ga) = (n-1)/6.
\end{equation}  
In particular, for HARTS $H_k$ (see Example \ref{optimal}) we get $t(H_k) = k(k-1)/2$, which was first proved in \cite[Theorem 4]{TZ1}. (In \cite[\S 4.3]{ABF} a gossiping algorithm for $H_k$ using $3k(k-1)/2$ time steps was devised.) In Algorithm \ref{alg:go} we will give an optimal gossiping algorithm for any 6-valent first-kind Frobenius circulant. 

\medskip
\textsf{C. Geometric representation.}~~In \cite{Z} a general method for producing optimal gossiping and routing schemes in a first-kind Frobenius graph was described. This method is abstract and relies on knowledge of the orbits of the complement on the kernel of the underlying Frobenius group. For 6-valent first-kind Frobenius circulants, we are able to acquire such knowledge with the help of a geometric representation \cite{YFMA}. 

We label the cells of the hexagonal lattice \cite{YFMA} in the plane by 
$$
\Z_+ \times \Z_+ \times \Z_6 \rightarrow \Z_n,\; (i, j, k) \mapsto [(i+ja)a^k],
$$
where $\Z_+$ is the set of nonnegative integers. (See Figure \ref{fig:hexagon3}, where, for example, $(3, 1, 0) \mapsto [(3+31)31^0] = [34]$ and $(2, 1, 2) \mapsto [(2+31)31^2] = [10]$ as $n=49$ and $a=31$.) The distance in $\Ga$ between $[u] \in \Z_n$ and $[0]$ is then given by
\begin{equation}
\label{eq:dist}
d\left([0],[u]\right)=\min\left\{i+j: \exists (i, j, k) \in \Z_+ \times \Z_+ \times \Z_6,\;u \equiv (i+ja)a^k\;\mbox{\rm mod}\; n\right\}.
\end{equation} 

Let $C_{\ell}$ be the set of hexagonal cells distant $\ell$ apart from a fixed $[0]$-labelled cell in the hexagonal lattice, $\ell = 1, 2, \ldots$ Then $C_{\ell}$ consists of those cells with coordinates $(i, \ell-i, k) \in \Z_+ \times \Z_+ \times \Z_6$, $1 \le i \le \ell$, $0 \le k \le 5$. 
The $H$-orbit on $\Z_n$ containing $[x] \in \Z_n$ is $H[x] = \{[a^i x]: i \ge 0\} = \{[a^i x]: 0 \le i \le 5\}$. In order to describe our optimal gossiping and routing schemes, we construct a `minimum distance diagram' $X$ by using the following algorithm.  

\begin{algorithm}
\label{alg:x}
\begin{itemize}
\item[\rm 1.] To begin with we put the six elements of $H[1]$ into $X$.  

\item[\rm 2.] Set $\ell := 2$ and do the following:
\begin{itemize}
\item[\rm (a)] Examine the cells $(\ell, 0, 0), (\ell-1, 1, 0), \ldots, (1, \ell-1, 0)$ of $C_{\ell}$ one by one {\em in this order}. When examining $(i, \ell-i, 0)$, if $H[i+(\ell-i)a]$ is not contained in the current $X$, add all its elements to $X$ and then move on to examine the next cell $(i-1, \ell-i+1, 0)$; otherwise examine the next cell straightaway. 

\item[\rm (b)] Set $\ell:=\ell + 1$ and go to Step 2(a). 

\item[\rm (c)] Stop when all elements of $\Z_n \setminus \{[0]\}$ are contained in $X$. 
\end{itemize} 
\end{itemize} 
\end{algorithm}

In the final $X$ each element of $\Z_n \setminus \{[0]\}$ appears exactly once, and $X$ tessellates the plane \cite{YFMA}. See Figure \ref{fig:hexagon3} for $TL_{49}(31, 30, 1)$. 

\begin{figure}[ht]
\centering
\includegraphics*[height=6.0cm]{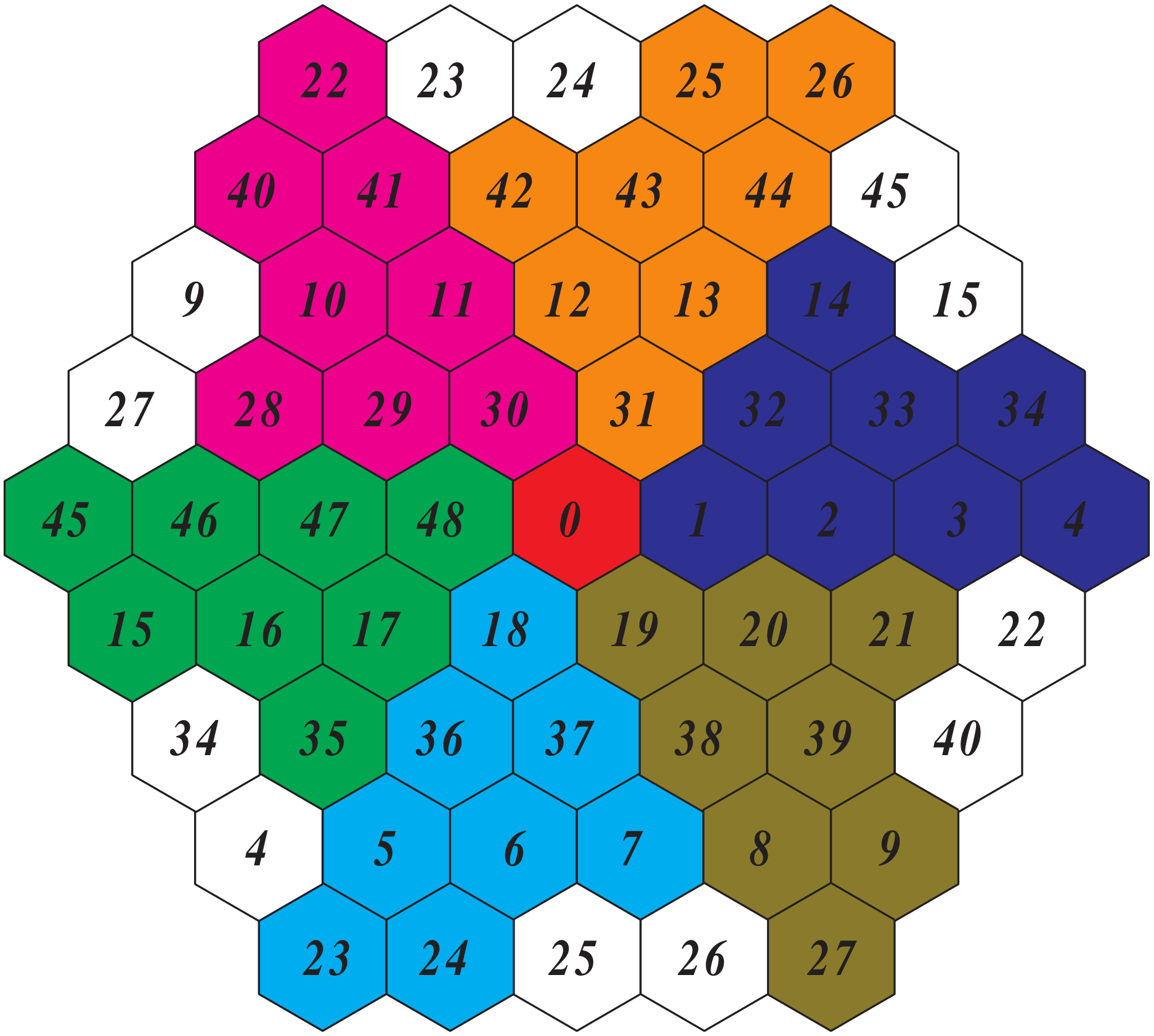}
\caption{\small Hexagonal tessellation of $TL_{49}(31, 30, 1)$. The coloured area is 
the minimum distance diagram $X \cup \{[0]\}$, where $Y=\{[1], [2], [3], [4], [32], [33], [34], [14]\}$ is the part of $X$ in the first sector. The other five sectors $Y[31], Y[30], -Y, -Y[31], -Y[30]$ of $X$ are obtained by rotating $Y$ about the origin by $60^{\circ}, 120^{\circ}, 180^{\circ}, 240^{\circ}, 300^{\circ}$ respectively. This graph has diameter $4$ and type $(i_0, i_1, i_2) = (4, 3, 1)$.} 
\label{fig:hexagon3}
\end{figure}  

Slightly abusing terminology, we may take $X$ as the set of cells $(i, \ell-i, k)$ such that $[(i+(\ell-i)a)a^k] \in X$. The shape of $X$ is determined by the values of the parameter $i_j$ defined as follows. Let
$$
r = \max\left\{i \ge 1: \mbox{$(i, 0, 0)$ is contained in $X$}\right\}. 
$$ 
Then $d([0], [i]) = i$, $0 \le i \le r$, for otherwise $r=d([0], [r]) \le d([0], [i])+d([i], [r]) = d([0], [i])+d([0], [r-i]) \le (i-1)+(r-i)$. On the other hand, for any $i \ge r+1$, $d([0], [i]) = d([0], [ia]) \le i-1$ by the definition of $r$. Thus, if $j \ge r+1$ and $i \ge 1$, then $(i, j, 0)$ is not contained in $X$ for otherwise $i+j = d([0], [i+ja]) \le d([0], [ja]) + d([ja], [i+ja]) = d([0], [ja]) + d([0], [i]) \le (j-1)+i$.
Define
$$
i_j = \max\left\{i \ge 0: \mbox{$(i, j, 0)$ is contained in $X$}\right\},\;0 \le j \le r. 
$$ 
Then $i_0 = r$ and $i_j$ is well-defined as $d([0], [ja]) = d([0], [j]) = j$ and so $(0, j, 0)$ belongs to $X$. The values of $i_j$ can be obtained by running Algorithm \ref{alg:x}.

Denote by $Y$ the subset of $X$ in the first sector of the hexagonal lattice.  

\begin{lem}
\label{lem:tel}
With the notation above, the following hold:
\begin{itemize}
\item[\rm (a)] $Y = \{[i+ja]: 1 \le i \le i_j, 0 \le j \le r\}$, $X = \cup_{k=0}^5 Y [a^k] = \{[(i+ja)a^k]: 1 \le i \le i_j, 0 \le j \le r, 0 \le k \le 5\}$, and every element of $\Z_{n} \setminus \{[0]\}$ appears in $X$ exactly once;
\item[\rm (b)]
if $[i+ja] \in Y$, then $d\left([0], [(i+ja)a^k]\right) = i+j$, $0 \le k \le 5$; 
\item[\rm (c)] $\sum_{j=0}^{r}i_j = (n-1)/6 \ge r = i_0 \ge i_1 \ge \cdots \ge i_r \ge 0$;
\item[\rm (d)] $D = \max\{i_j+j: 0 \le j \le r\}$; 
\item[\rm (e)] $n_t = |\Ga_{t}[0] \cap Y| = |\{[i+ja] \in Y: i+j =t\}| = |\{j: 0 \le j \le r, i_j + j \ge t\}|$, $1 \le t \le D$.
\end{itemize}
\end{lem}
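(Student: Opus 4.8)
The plan is to unwind the definitions behind Algorithm~\ref{alg:x} and the parameters $r=i_0$ and $i_j$, translating the combinatorial bookkeeping of the algorithm into the five stated identities. The central fact to establish first is that $X$ is a fundamental domain for the $H$-action: every nonzero residue class appears in $X$ exactly once, and $X$ is the union of the six rotated copies $Y[a^k]$, $0\le k\le 5$. This is essentially built into the algorithm --- in Step~2(a) we add a whole $H$-orbit $H[i+(\ell-i)a]$ only when it is not yet represented, and the algorithm terminates precisely when every element of $\Z_n\setminus\{[0]\}$ has been caught --- together with the fact (stated in the excerpt, following \cite{YFMA}) that $X$ tessellates the plane. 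Once we know $X$ is built orbit-by-orbit and each orbit has size $6$ (because $H$ is semiregular on $\Z_n\setminus\{[0]\}$ by Theorem~\ref{existence}), the six rotations $Y[a^k]$ partition $X$, each in its own $60^\circ$ sector; this gives the structural part of (a).

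Next I would pin down the \emph{shape} of $Y$, i.e.\ that $Y=\{[i+ja]:1\le i\le i_j,\ 0\le j\le r\}$. The excerpt has already done most of the work: it shows $d([0],[i])=i$ for $0\le i\le r$, that $(i,j,0)\notin X$ once $j\ge r+1$, and that $i_j$ is well-defined for $0\le j\le r$ because $(0,j,0)\in X$ always. What remains is the ``staircase'' claim that within a fixed column $j$ the cells of $X$ are exactly $(0,j,0),(1,j,0),\dots,(i_j,j,0)$ with no gaps --- equivalently, if $(i,j,0)\in X$ then $(i',j,0)\in X$ for all $0\le i'\le i$. This follows from the greedy order in Step~2(a), which examines cells of $C_\ell$ from $(\ell,0,0)$ down to $(1,\ell-1,0)$, combined with a distance inequality: if $(i,j,0)\in X$ then $d([0],[i+ja])=i+j$, and then $d([0],[i'+ja])\le d([0],[i-i'])+d([0],[i'+ja]-[\,i-i'\,])\le \dots$ forces $[i'+ja]$ to sit at distance $i'+j$ from $[0]$ and hence to be placed in $Y$ at column $j$ before any later column is touched. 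Part (b) is then immediate: for $[i+ja]\in Y$ we have $d([0],[i+ja])=i+j$ by the argument just sketched, and $d([0],[(i+ja)a^k])=d([0],[i+ja])$ since multiplication by the unit $[a^k]$ is a graph automorphism of $\Ga$ (it lies in $H\le\Z_n^*$).

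Parts (c), (d), (e) are then countings over the established shape of $Y$. For (c): $|Y|=\sum_{j=0}^r i_j$ and $|Y|=|X|/6=(n-1)/6$ from part~(a); the monotonicity $i_0\ge i_1\ge\cdots\ge i_r\ge 0$ is the staircase being non-increasing in $j$, which comes from the same distance inequality (if $(i,j{+}1,0)\in X$ then $(i,j,0)\in X$, since $d([0],[i+ja])\le d([0],[i+(j{+}1)a])+d([ \,i+(j{+}1)a\,],[i+ja]) = (i+j+1)+ d([0],[a]) $ wants sharpening --- more cleanly, $d([0],[i+ja])\le d([0],[i+(j{+}1)a])-1 = i+j$ using a step in the ``$-a$'' direction, forcing $[i+ja]$ into column $j$ at height $\ge i$); and $i_0=r$ by definition of $r$. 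For (d): $\diam(\Ga)=\max_{[x]\in\Z_n}d([0],[x])=\max_{[x]\in X}d([0],[x])=\max_{[x]\in Y}d([0],[x])$ (rotations preserve distance), and over $Y$ the maximum of $i+j$ with $1\le i\le i_j$ is $\max_j (i_j+j)$. For (e): $\Ga_t[0]\cap Y=\{[i+ja]\in Y: i+j=t\}$ by (b), and for each column $j$ there is exactly one such element iff $i_j\ge t-j\ge 1$, i.e.\ iff $i_j+j\ge t$ (the lower bound $t-j\ge1$ being automatic since if $t-j\le 0$ then also $i_j+j\ge t$ trivially fails to contribute a new element --- one checks $j<t\le i_j+j$); hence $n_t=|\{j:0\le j\le r,\ i_j+j\ge t\}|$, and $n_t=|\Ga_t[0]\cap Y|$ because $\Ga_t[0]$ is a union of $n_t$ full $H$-orbits, each meeting $Y$ exactly once.

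\textbf{Main obstacle.} The delicate point is the staircase/no-gaps property underpinning part~(a) and hence everything else: proving rigorously that the greedy Algorithm~\ref{alg:x} produces a $Y$ whose columns are initial segments $\{1,\dots,i_j\}$ and whose column heights are non-increasing. This needs a careful induction on $\ell$ tracking which orbits have been absorbed, using repeatedly the metric inequalities $d([0],[u+v])\le d([0],[u])+d([0],[v])$ and $d([0],[u])=d([0],[ua^k])$ together with the explicit examination order in Step~2(a); the geometric tessellation statement from \cite{YFMA} can be invoked to shortcut the claim that $X$ covers each nonzero class exactly once, but the precise triangular shape still requires the above argument. Everything after that is routine summation.
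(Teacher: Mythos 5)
Your proposal follows essentially the same route as the paper's own proof: the triangle inequality $d([0],[i_j+ja]) \le d([0],[i+ja]) + d([0],[i_j-i])$ yields the no-gap/staircase property and $d([0],[i+ja])=i+j$ on $Y$; a reverse triangle inequality in the $a$-direction yields $i_{j-1}\ge i_j$; and parts (b), (d), (e) reduce to counting over $Y$ using the fundamental-domain property of $X$ coming from Algorithm~\ref{alg:x} and the tessellation result of \cite{YFMA}. The only slip is the direction of your monotonicity inequality --- what is needed is $d([0],[i+ja]) \ge d([0],[i+(j+1)a]) - 1$ (reverse triangle inequality) combined with the trivial bound $d([0],[i+ja]) \le i+j$ --- but you flag this yourself and the corrected version is precisely the contrapositive argument the paper gives, so the two approaches coincide.
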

\begin{proof}
(a) It suffices to prove that, if $i_j \ge 1$ for some $0 \le j \le r$, then $d([0], [i+ja]) = i+j$ for every $1 \le i \le i_j$. Suppose otherwise. Then $i_j + j = d([0], [i_j + ja]) \le d([0], [i+ja]) + d([i+ja], [i_j + ja]) = d([0], [i+ja]) + d([0], [i_j - i]) \le (i+j-1) + (i_j - i) = i_j + j - 1$, a contradiction.

(c) Suppose $i_{j-1} < i_{j}$ for some $j$. Then $(i_{j-1}+1)+(j-1) \ge d([0], [(i_{j-1}+1)+(j-1)a]) \ge d([0], [i_{j}+ja]) - d([(i_{j-1}+1)+(j-1)a], [i_{j}+ja]) = d([0], [i_{j}+ja]) - d([0], [(i_j - i_{j-1} - 1)+a]) \ge (i_j + j) - (i_j - i_{j-1}) = (i_{j-1}+1)+(j-1)$. Thus $d([0], [(i_{j-1}+1)+(j-1)a]) = (i_{j-1}+1)+(j-1)$, which contradicts the definition of $i_{j-1}$. So we have $i_{j-1} \ge i_{j}$ for $1 \le j \le r$. The truth of $\sum_{j=0}^{r}i_j = (n-1)/6$ follows from (a) and the symmetry of $X$.

The truth of (b), (d) and (e) follows from (a) and the definition of $X$. 
\qed \end{proof}
 
Part (a) of Lemma \ref{lem:tel} implies that $X$ is partitioned into six sectors, namely $Y, Y[a], Y[a^2] = Y[a-1], Y[a^3] = -Y, Y[a^4] = -Y[a], Y[a^5] = -Y[a-1]$, which are permuted cyclically by $H$. 

\medskip  
\textsf{D. Optimal routing and gossiping schemes.}~~Guided by the general approach in \cite{Z}, we now construct a spanning tree $T_0$ of $\Ga$ rooted at $[0]$ and use it to give optimal gossiping and routing in $\Ga$. Let 
$A_{1,1} = \{([0], [v]): [v] \in H\}$
and add these six arcs to $T_0$. Inductively, for $0 \le t \le D-1$ and each 
$[v_l] \in \Ga_{t+1}[0] \cap Y$ ($1 \le l \le n_{t+1}$), choose a neighbour $[u_l]$ of $[v_l]$ in $\Ga_{t}[0] \cap Y$ and add arcs 
$A_{t+1, l} = \{([u_la^k], [v_la^k]): 0 \le k \le 5\}$
to $T_0$. (It is allowed to have $u_l = u_{l'}$ for $l \ne l'$.) Thus the branches of $T_0$ in $Y[a^k]$ are obtained by rotating the branch of $T_0$ in $Y$ by $(60k)^{\rm o}$ and the set of arcs of $T_0$ from $T_0(t)$ to $T_0(t+1)$ is $\cup_{1 \le l \le n_{t+1}}A_{t+1, l}$, where $T_0(t)$ is the set of vertices distant $t$ apart from $[0]$ in $T_0$. Since $H$ is semiregular on $\Z_n \setminus \{[0]\}$, one can show that each $A_{t+1, l}$ is a matching of six arcs (see \cite{Z}). Note that $T_0(t) = \Ga_t[0]$, $0 \le t \le D$, and $T_0$ is a {\em shortest path spanning tree} of $\Ga$ with root $[0]$, that is, the unique path in $T_0$ between $[0]$ and any vertex is a shortest path in $\Ga$. 

For $[u] \in \Z_n$, define $T_u$ to be the graph with vertex set $\Z_n$ and arcs $([x+u], [y+u])$ with $([x], [y])$ running over all arcs of $T_0$. Since $\Z_n$ acts on itself (by addition) as a group of automorphisms of $\Ga$, $T_u$ is a shortest path spanning tree of $\Ga$ with root $[u]$. Denote by $P_{uv}$ the unique path in $T_u$ from $[u]$ to $[v]$. Define 
\begin{equation}
\label{eq:P}
{\cal P} = \{P_{uv}: [u], [v] \in \Z_n, [u] \ne [v]\}.
\end{equation}
  
\begin{algorithm}
\label{alg:go} ~Let $M_u$ denote the message originating at $[u] \in \Z_n$.

{\sc Phase 1:}~Initially, $M_u$ is transmitted from $[u]$ to $T_{0}(1)+[u]$ along the six arcs of $A_{1,1}+[u]$, and this is carried out for all $[u] \in \Z_{n}$ simultaneously.

{\sc Phase $t+1$:}~Do the following for $t=1, 2, \ldots,D-1$ successively: for $l = 1, 2, \ldots, n_{t+1}$, in the $l$th step of the $(t+1)$th phase, for all $[u] \in \Z_{n}$ transmit $M_u$ from $T_{0}(t)+[u]$ to $T_{0}(t+1)+[u]$ along the six arcs of $A_{t+1,l}+[u]$ at the same time step.
\end{algorithm}

A routing $\cal P$ of $\Ga$ is called {\em $G$-arc-transitive} \cite{LP} for some $G \le \Aut\Ga$ if every element of $G$ maps paths of $\cal P$ to paths of $\cal P$ and moreover $G$ is transitive on the set of arcs of $\Ga$. A routing under which all edges (arcs, respectively) have the same load is called {\em edge-uniform} ({\em arc-uniform}, respectively). 
The following is a consequence of Theorem \ref{existence}, Lemma \ref{lem:tel}, \cite[Theorem 1.6]{FLP} and \cite[Theorems 5.1 and 6.1]{Z}. 

\begin{cor}
\label{cor:rt-gs}
Let $\Ga = TL_{n}(a, a-1, 1)$ be a 6-valent first-kind Frobenius circulant, where each prime factor of $n \ge 7$ is congruent to 1 modulo 6 and $a$ is a solution to (\ref{eq:3rd}). Then
$\pi(\Ga) = 2\overrightarrow{\pi}(\Ga) = 2\overrightarrow{\pi}_m(\Ga) = \pi_m(\Ga)$ and it is given by (\ref{eq:odd}) or (\ref{eq:even}) (depending on whether the corresponding $c+d$ is odd or even), and $t(\Ga)$ is given by (\ref{eq:t1}).
Moreover, $\cal P$ given in (\ref{eq:P}) is a shortest path routing of $\Ga$ which is $\Z_{n} \rtimes H$-arc transitive (where $H$ is as given in (\ref{eq:H})), edge- and arc-uniform, and optimal for $\pi$, $\overrightarrow{\pi}$, $\overrightarrow{\pi}_m$ and $\pi_m$ simultaneously.

Furthermore, Algorithm \ref{alg:go} gives an optimal gossiping scheme for $\Ga$ such that: (a) the message originating from any vertex is transmitted along shortest paths to other vertices; (b) for each vertex $[w]$ of $\Ga$, at any time precisely six arcs are used to transmit the message originating from $[w]$, and at any time $\ge 2$ these six arcs form a matching of $\Ga$; (c) at any time each arc of $\Ga$ is used exactly once for message transmission.
\end{cor}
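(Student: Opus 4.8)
The plan is to derive Corollary~\ref{cor:rt-gs} by checking that the abstract machinery of \cite{Z} applies to $\Ga=TL_n(a,a-1,1)$ and that the concrete objects constructed in \S\ref{sec:rg}.C--D are exactly the ones that machinery requires. First I would record that, by the first part of the proof of Theorem~\ref{existence} (statement (d)), $\Ga$ is a first-kind $\Z_n\rtimes H$-Frobenius graph with $H=\la[a]\ra$ as in (\ref{eq:H}), so \cite[Theorem~1.6]{FLP} and \cite[Theorems~5.1 and~6.1]{Z} are directly applicable. This immediately yields $\pi(\Ga)=2\overrightarrow{\pi}(\Ga)=2\overrightarrow{\pi}_m(\Ga)=\pi_m(\Ga)=2\sum_{t=0}^{D}t\,n_t$ with all four quantities meeting the lower bounds in (\ref{eq:4ineq}), and $t(\Ga)=(|V|-1)/\d=(n-1)/6$ meeting (\ref{eq:t}). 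Plugging the explicit values of $n_t$ from (\ref{eq:nt})--(\ref{eq:nt*}) into $2\sum_t t\,n_t$ and performing the summation (splitting according to the parity of $c+d$) gives the closed forms (\ref{eq:odd}) and (\ref{eq:even}); this is a routine but slightly lengthy computation using $\sum t=D(D+1)/2$, $\sum t^2$, and the breakpoint $(c+d)/2$.

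Next I would verify the claimed properties of the routing $\cal P$ defined in (\ref{eq:P}). Each $T_u$ is the translate by $[u]$ of the shortest path spanning tree $T_0$, and since $\Z_n$ acts on itself by addition as automorphisms of $\Ga$, each $T_u$ is a shortest path spanning tree rooted at $[u]$; hence every $P_{uv}$ is a shortest path and $\cal P$ is a shortest path routing. For arc-transitivity of the routing in the sense of \cite{LP}, I would check that $\Z_n\rtimes H$ permutes the trees $\{T_u\}$ among themselves — translations permute the $T_u$ by construction, and the rotations $[a^k]$ fix $T_0$ because, by construction (Lemma~\ref{lem:tel}(a) and the definition of $A_{t+1,l}$), the branch of $T_0$ in each sector $Y[a^k]$ is the rotation by $(60k)^{\mathrm o}$ of the branch in $Y$, so $[a]$ maps arcs of $T_0$ to arcs of $T_0$. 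Combined with the fact that $\Z_n\rtimes H$ is arc-transitive on $\Ga$ (Theorem~\ref{existence}(d) via \cite[Lemma~2.1]{Z}), this makes $\cal P$ a $\Z_n\rtimes H$-arc-transitive routing; edge- and arc-uniformity then follow because an arc-transitive group of automorphisms forces equal load on all arcs (and on all edges), and optimality for $\pi,\overrightarrow{\pi},\overrightarrow{\pi}_m,\pi_m$ follows since a uniform shortest path routing on a Frobenius graph attains the bounds in (\ref{eq:4ineq}), as in \cite[Theorem~6.1]{Z}.

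Finally, for the gossiping statement I would argue that Algorithm~\ref{alg:go} is the realization, in this concrete setting, of the optimal gossiping scheme of \cite[Theorem~5.1]{Z}. In Phase~$1$ the six arcs of $A_{1,1}+[u]=\{([u],[u+v]):[v]\in H\}$ form a perfect matching issued from $[u]$ (the connection set is $H$), and in the $l$th step of Phase~$t+1$ the set $A_{t+1,l}+[u]$ is a translate of the six-arc matching $A_{t+1,l}$; that each $A_{t+1,l}$ is a matching of six arcs is exactly the fact recalled from \cite{Z} (it uses that $H$ is semiregular on $\Z_n\setminus\{[0]\}$, so the six vertices $[v_la^k]$, $0\le k\le 5$, are distinct and the six chosen predecessors form a matching). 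This gives (a) — message $M_u$ travels along the root-$[u]$ shortest path tree $T_u$, hence along shortest paths — and (b). For (c), the total number of (message, arc) transmissions used by the algorithm is $n\cdot(\text{number of arcs of }T_0)=n\cdot(n-1)$, which equals the number of arcs of $\Ga$ (namely $6n$) times $(n-1)/6$; since the schedule runs for exactly $(n-1)/6$ time steps (Phase~$1$ plus $\sum_{t=1}^{D-1}n_{t+1}=\sum_{t=2}^{D}n_t$ steps, and $\sum_{j=0}^{r}i_j=(n-1)/6$ by Lemma~\ref{lem:tel}(c), with $n_t$ as in Lemma~\ref{lem:tel}(e)), a counting argument together with the matching property shows every arc is used exactly once per time step and hence exactly once in total for each message — in particular no arc conflict ever occurs, so the schedule is feasible and optimal by (\ref{eq:t1}). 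The main obstacle I anticipate is bookkeeping: verifying carefully that the step counts match (that $\sum_{t=2}^{D}n_t = (n-1)/6 - 1$, equivalently $\sum_{t=1}^{D}n_t=(n-1)/6$, which is Lemma~\ref{lem:tel}(c)) and that the global count $n(n-1)$ of transmissions forces the no-conflict property — this is where the argument must be made airtight rather than merely cited, since everything else reduces cleanly to the quoted theorems of \cite{FLP} and \cite{Z}.
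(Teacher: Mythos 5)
Your proposal is correct and follows essentially the same route as the paper, which gives no separate written proof but states the corollary as a consequence of Theorem \ref{existence}, Lemma \ref{lem:tel}, \cite[Theorem 1.6]{FLP} and \cite[Theorems 5.1 and 6.1]{Z}, together with the construction of $T_0$, $\cal P$ and Algorithm \ref{alg:go} in the surrounding text. Your write-up simply unpacks that citation chain and fills in the bookkeeping (the step count $\sum_{t=1}^{D}n_t=(n-1)/6$ and the matching/no-conflict verification) that the paper delegates to \cite{Z}.
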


We remark that the spanning tree $T_0$ constructed above is not unique, and different choices of $T_0$ produce different optimal gossiping and routing schemes.

Since $\Ga$ achieves the trivial lower bounds in (\ref{eq:4ineq}), by Lemma \ref{lem:tel} we obtain 
\begin{equation}
\label{eq:fw}
\pi(\Ga) = \sum_{j=0}^r i_j (i_j + 2j + 1).
\end{equation}

\begin{ex}
\label{ex:mix}
It can be verified that, for $\Ga = TL_{n_k} (3k+2, 3k+1, 1)$ ($k \ge 2$) in Example \ref{optimal}, we have $i_j = k-j$ ($0 \le j \le k-1$) and $i_k = 0$. From this and (\ref{eq:fw}) we recover the result $\pi(\Ga) = k(k+1)(2k+1)/2$ obtained in \cite[Theorem 5]{TZ1}. In \cite{TZ1}, the authors also gave optimal routing and gossiping schemes for this particular graph. Corollary \ref{cor:rt-gs} generalizes these to all 6-valent first-kind Frobenius circulants. 

The graph $\Ga$ in Example \ref{ex:6g} satisfies $i_j = 2g-j$ ($0 \le j \le g-1$), $i_j = 2g-j-1$ ($g \le j \le 2g-1$) and $i_{2g} = 0$. From this we obtain $\pi(\Ga) = 2g(8g^2+1)/3$ by (\ref{eq:fw}) and $t(\Ga) = 2g^2$ by (\ref{eq:t1}).
\qed \end{ex}

\textsf{E. Wiener index.}~~The {\em Wiener index} of a graph is the sum of the distances between all unordered pairs of vertices. With motivation from chemistry, this index has attracted considerable interest in chemical graph theory over sixty years (see \cite{DGKP} for a survey on the topic for hexagonal systems). As a by-product of the discussion above, we obtain the following result.

\begin{cor}
\label{cor:w}
The Wiener index of any 6-valent first-kind Frobenius circulant $TL_{n}(a, a-1, 1)$ is equal to $3n/2$ times the expression in (\ref{eq:odd}) or (\ref{eq:even}), depending on whether the corresponding $c+d$ is odd or even, or equivalently $3n/2$ times the right-hand side of (\ref{eq:fw}).
\end{cor}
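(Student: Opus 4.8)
The plan is to express the Wiener index directly in terms of the forwarding index $\pi(\Ga)$, which has already been computed, rather than re-deriving everything from scratch. First I would recall that $\Ga$ is a Cayley graph on $\Z_n$, hence vertex-transitive, so for the sum of distances over all ordered pairs we have $\sum_{(u,v)\in V\times V} d(u,v) = n\sum_{v\in V} d([0],v)$. Since each distance layer $\Ga_t[0]$ is a disjoint union of $n_t$ $H$-orbits and $H$ is semiregular on $\Z_n\setminus\{[0]\}$, every such orbit has exactly six elements, so $W_t(\Ga)=6n_t$ for $1\le t\le D$ and therefore $\sum_{v\in V} d([0],v)=6\sum_{t=0}^{D} t\,n_t$.

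Next I would invoke the fact, quoted from \cite[Theorem 1.6]{FLP} and \cite[Theorem 6.1]{Z} in the paragraph preceding (\ref{eq:odd}), that for any first-kind Frobenius graph the trivial lower bound in (\ref{eq:4ineq}) is attained; equivalently, since $\Ga$ has $|E(\Ga)|=3n$ edges, $\pi(\Ga)=\big(\sum_{(u,v)\in V\times V}d(u,v)\big)/(3n)=2\sum_{t=0}^{D} t\,n_t$. Combining this with the definition of the Wiener index, $W(\Ga)=\tfrac12\sum_{(u,v)\in V\times V}d(u,v)$ (each unordered pair is counted twice and the diagonal contributes $0$), gives $W(\Ga)=\tfrac12\cdot n\cdot 6\sum_{t} t\,n_t = 3n\sum_{t} t\,n_t = \tfrac{3n}{2}\,\pi(\Ga)$.

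Finally I would substitute the two available closed forms for $\pi(\Ga)$: by Corollary \ref{cor:rt-gs} (or directly by the derivation leading to (\ref{eq:odd})--(\ref{eq:even})), $\pi(\Ga)$ equals the expression in (\ref{eq:odd}) when the associated $c+d$ is odd and the expression in (\ref{eq:even}) when it is even; and by (\ref{eq:fw}) it also equals $\sum_{j=0}^{r} i_j(i_j+2j+1)$. Multiplying each of these by $3n/2$ yields the two stated formulas. There is essentially no obstacle here: the only points requiring a line of care are the bookkeeping $\sum_{(u,v)\in V\times V}d(u,v)=2W(\Ga)$, the edge count $|E(\Ga)|=3n$, and the observation that $W_t(\Ga)=6n_t$ holds for \emph{every} $t$ (including the exceptional layer that occurs when $c+d$ is even), which follows from the semiregularity of $H$ rather than from the case analysis of Theorem \ref{thm:fb}.
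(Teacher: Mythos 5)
Your proposal is correct and follows exactly the route the paper intends: the corollary is stated as a by-product of the fact that $\pi(\Ga)$ attains the trivial lower bound $\sum_{(u,v)}d(u,v)/|E|$ with $|E|=3n$, so $W(\Ga)=\tfrac12\sum_{(u,v)}d(u,v)=\tfrac{3n}{2}\pi(\Ga)$, and the closed forms follow from (\ref{eq:odd}), (\ref{eq:even}) and (\ref{eq:fw}). Your added care about $W_t(\Ga)=6n_t$ holding for every layer via semiregularity of $H$ matches the paper's own remark in \S\ref{sec:rg}.
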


\section{Broadcasting}
\label{sec:br}

A process of disseminating a message from a {\em source vertex} $x$ to all other vertices in a network $\Ga$ is called {\em broadcasting} \cite{HKMP} (one-to-all communication) if in each time step any vertex who has received the message already can retransmit it to at most one of its neighbours. Let $b(\Ga, x)$ be the minimum $t$ such that all vertices receive the message after $t$ steps. The {\em broadcasting time} \cite{HKMP} of $\Ga$, denoted by $b(\Ga)$, is the maximum among $b(\Ga, x)$ for $x$ running over all vertices of $\Ga$. 
 
Since the diameter is a trivial lower bound on the broadcasting time, any graph whose broadcasting time is close to its diameter may be thought as efficient in terms of broadcasting. The following result shows that all 6-valent first-kind Frobenius circulants are such graphs. 
 
\begin{thm}
\label{thm:broad}
Let $\Ga = TL_{n}(a, a-1, 1)$ be a 6-valent first-kind Frobenius circulant, where each prime factor of $n \ge 7$ is congruent to 1 modulo 6 and $a$ is a solution to (\ref{eq:3rd}). Let $D$ be the diameter of $\Ga$. Then 
\begin{equation}
\label{eq:bounds}
b(\Ga) = D+2\;\, \mbox{or}\;\, D+3
\end{equation}
and both $D+2$ and $D+3$ are attainable.
In particular, if $n = 12g^2+1 \ge 49$ and $a = 6g^2 + 3g + 1$ as in Example \ref{ex:6g}, then 
\begin{equation}
\label{eq:ub}
b(\Ga) = D+3 = 2g+3.
\end{equation}
\end{thm}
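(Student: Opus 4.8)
The plan is to reduce at once to a single source — $b(\Ga)=b(\Ga,[0])$ by vertex-transitivity of the Cayley graph $\Ga=TL_n(a,a-1,1)$ — and then to prove the two general bounds $D+2\le b(\Ga)\le D+3$, after which the displayed statement for $n=12g^2+1$ follows from three inputs: (i) the diameter is $D=2g$; (ii) the general upper bound gives $b(\Ga)\le D+3$; and (iii) this particular family attains the upper value $D+3$. For (i): by Example \ref{ex:6g}, $\Ga\cong EJ_{(2g-1)+(2g+1)\rho}$, and replacing the generator by an associate and/or swapping the two coefficients (which by the remark after Theorem \ref{thm:fb} does not change the graph) we may assume $c=2g+1\ge d=2g-1\ge0$, so Theorem \ref{thm:fb} gives $D=\lfloor(2c+d)/3\rfloor=\lfloor(6g+1)/3\rfloor=2g$; equivalently this equals $\max_j(i_j+j)$ with the $i_j$ listed in Example \ref{ex:mix}, and from those values $W_t=6t$ for $1\le t\le 2g-1$ and $W_{2g}=6g$, data we will use below.

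For the upper bound $b(\Ga)\le D+3$ I would generalise the hexagonal-mesh broadcasting scheme \cite[Algorithm A2]{CSK}, exploiting the six-fold rotational symmetry. By Lemma \ref{lem:tel}(a) the minimum distance diagram decomposes as $X=\bigcup_{k=0}^5 Y[a^k]$, the complete rotation $[a]$ cyclically permuting the six congruent sectors, and $T_0$ is a shortest-path spanning tree compatible with this decomposition. First spend three steps informing the ball $B_1[0]$: this is possible because the six neighbours of $[0]$ form a $6$-cycle each of whose vertices is adjacent to $[0]$, so after step $2$ three consecutive cycle vertices plus $[0]$ are informed, and at step $3$ the remaining three cycle vertices are covered (two from the informed endpoints, one from $[0]$); this also shows $3$ steps are needed. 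From then on, broadcast outward layer by layer along $T_0$, pipelining transmissions so that at every time $t$ the informed set contains all of $B_{t-c_0}[0]$ for an absolute constant $c_0$: since consecutive spheres in the growing regime differ in size by exactly $6$ while the spheres in the shrinking regime decrease, any distance-$(t+1)$ vertex momentarily left behind is reachable in one more step from the larger fully-informed sphere at distance $t-1$, which keeps the lag bounded; the one delicate point is the wrap-around at the outermost sphere, which costs at most one extra step. Careful accounting yields $b(\Ga)\le D+3$.

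For the lower bound I would first establish $b(\Ga)\ge D+2$ in general. The bound $b(\Ga)\ge D$ is trivial; the two extra units come from two facts. First, \emph{the lead is effectively a single path}: if $x_t$ is the number of distance-$t$ vertices informed by step $t$, then such a vertex (necessarily informed at step exactly $t$) is informed by a distance-$(t-1)$ vertex informed at step exactly $t-1$, so $x_t\le x_{t-1}\le\cdots\le x_1=1$. Second, $\Ga_1[0]$ cannot be wholly informed before step $3$, since $|I_2|\le4<7=|B_1[0]|$. Feeding these into the layer structure — only $\Ga_{t-1}[0]$ can inform $\Ga_t[0]$ for the first time, while $|\Ga_t[0]|>|\Ga_{t-1}[0]|$ throughout the growing regime, so $\Ga_t[0]$ is never full at step $t$ for $t\ge2$ — forces a cumulative delay of at least two, i.e. $b(\Ga)\ge D+2$ after a careful bookkeeping of when each layer can be completed. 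For the family $n=12g^2+1$ the growing regime has full length $D-1$ and the outermost sphere has only $6g=3D$ vertices, so there is no shrinking regime in which slack could be recovered; combining the single-lead bound $x_t\le1$, the doubling bound $|I_t|\le2^t$, and the exact sphere sizes $W_t=6t$ $(1\le t\le 2g-1)$, $W_{2g}=6g$ in an induction over the layers, one shows the third unit of delay is unavoidable, i.e. $b(\Ga)\ge D+3=2g+3$. With the upper bound this gives $b(\Ga)=2g+3$. (For completeness one must also exhibit a circulant realising the lower value $D+2$; a direct check on a small case, e.g. $TL_{13}(4,3,1)$ with $D=2$, does this.)

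The main obstacle is the last step: turning the informal principle "the bulk cannot catch up with the single lead" into a rigorous proof of the third unit of delay for $n=12g^2+1$. The estimates $|I_t|\le 2^t$ and $x_t\le1$ by themselves do not rule out broadcasting in $D+2$ steps for large $g$ (for large $g$ one has $2^{D+2}\gg n$), so one must genuinely exploit the interaction of the precise sphere sizes with the local triangular-lattice structure of $\Ga$ — that each sphere $\Ga_t[0]$ induces essentially a $6t$-cycle fed only from $\Ga_{t-1}[0]$ — which bounds the rate at which each sphere can be filled from the one inside it. Getting this induction to close with the right constant is where the real work lies.
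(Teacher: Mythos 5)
Your reduction to the source $[0]$ and the computation $D=2g$ for the family $n=12g^2+1$ are fine, but each of the three substantive claims --- the scheme giving $b(\Ga)\le D+3$, the general bound $b(\Ga)\ge D+2$, and the bound $b(\Ga)\ge D+3$ for $n=12g^2+1$ --- is left at the level of ``careful accounting'', and in each case the accounting you propose does not close. Take the lower bound $D+2$: your two facts ($x_t\le 1$ and $|I_2|\le 4$) do not force a cumulative delay of two, because a deficit in layer $t$ at step $t$ need not propagate. If $z_t$ denotes the number of vertices of $\Ga_t[0]$ informed by step $t+1$, the only inequality your facts yield is $z_t\le z_{t-1}+2x_t\le z_{t-1}+2$, hence $z_t\le 2t+1$; for $t=D$ this comfortably exceeds $|\Ga_D[0]|=6n_D$ whenever $n_D$ is small, so finishing at step $D+1$ is not excluded by counting alone. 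The paper's argument rests on a geometric fact you never invoke: among the six generators, some two \emph{cyclically consecutive} ones $[a^k],[a^{k+1}]$ are informed no earlier than step $3$ (this uses that the informed subset of the $6$-cycle on $H$ after step $2$ contains an edge through the step-$1$ vertex, so its complement cannot be an alternating triple), and every shortest path from $[0]$ to a diameter vertex of the sector $Y[a^k]$ must pass through $[a^k]$ or $[a^{k+1}]$. That sector-boundary fact is the missing idea; sphere-size bookkeeping cannot substitute for it.

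The same applies to the other two parts. For the upper bound the paper exhibits an explicit labelling (spine vertices $[ia^k]$ served at time $i+2$, interior vertices $[(i+ja)a^k]$ at time $i+j+3$, boundary-row vertices $[(r+ja)a^k]$ at time $r+j+2$); your ``pipelining with lag bounded by an absolute constant $c_0$'' does not pin down $c_0=3$, and the assertion that a left-behind vertex ``is reachable in one more step from the fully-informed sphere at distance $t-1$'' is precisely what needs proof, since each informed vertex may pass the message to only one neighbour per step and the inner sphere is smaller than the outer one. For $b(\Ga)\ge D+3$ when $n=12g^2+1$ you concede yourself that the induction does not close from $x_t\le 1$ and $|I_t|\le 2^t$; the paper's proof is not a refined count but a rigidity argument: the paths $P_k=[0],[a^k],[2a^k],\dots,[2ga^k]$ are the \emph{unique} shortest paths to the six vertices $[2ga^k]$, which forces the arrival times on $H$ to be a pattern such as $(1,2,2,3,3,3)$ up to rotation, forces each $[ia^k]$ on $P_k$ to be served at time exactly $i+2$, and then produces a scheduling conflict on any shortest path to $[(2g-1)+a]$, whose second vertex is already busy serving its own spine at the required step. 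Without identifying these unique shortest paths and the resulting conflict, the third unit of delay is not established.
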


\begin{proof}
We use the notation and results in the previous section. In particular, $X$ denotes the `minimum distance diagram' of $\Ga$ and $Y$ the first sector of it. Since $\Ga$ is vertex-transitive, it suffices to prove $D+2 \le b(\Ga, [0]) \le D+3$.  

We prove the lower bound first. Let $M$ denote the message at $[0]$ to be broadcasted to other vertices. At time 1 the message is sent from $[0]$ to exactly one of the six vertices of $H$. At time 2 the message can be sent to at most two vertices of $H$. So at least three vertices of $H$ receive $M$ at time 3 or later, and if there are exactly three such vertices then two of them must be `consecutive' around $[0]$. 
One can verify that in any case at least one vertex whose distance to $[0]$ is equal to $D$ receives $M$ at time $D+2$ or later. (Note that at least six vertices are at distance $D$ apart from $[0]$.) Therefore, $b(\Ga, [0]) \ge D+2$.
  
We prove the upper bound by giving a broadcasting scheme explicitly. Before doing so let us explain our notation first. A broadcasting scheme with source vertex $[0]$ can be defined by specifying a pair $L(x) = (t_x, y_x)$ for each $x \ne [0]$, which means that $x$ receives the message at time $t_x$ from a neighbour $y_x$ of $x$. We require $t_y < t_x$ for $y=y_x$ and $(t_x, y_x) \ne (t_z, y_z)$ if $x \ne z$.

Using the notation above, we define
$$
L([1]) = (1, [0]), L([a]) = (2, [1]), L([a^3]) = (2, [0])
$$
$$ 
L([a^2]) = (3, [a]), L([a^4]) = (3, [a^3]), L([a^5]) = (3, [0]).
$$
For each $k =0, 1, \ldots, 5$, define
$$
L([ia^k]) = (i+2, [(i-1)a^k]),\; \mbox{for}\; 2 \le i \le r
$$
$$
L([(i+ja)a^k]) = (i+j+3, [(i+(j-1)a)a^k]),\; \mbox{for}\; 1 \le i \le r-1,\, j \ge 1,\, [i+ja] \in Y
$$
$$
L([(r+ja)a^k]) = (r+j+2, [(r+(j-1)a)a^k]),\; \mbox{for}\; j \ge 1,\, [r+ja] \in Y.
$$
It is straightforward (but laborious) to verify that this $L$ defines a broadcasting scheme for $\Ga$. Since the maximum value of $i+j$ such that $[i+ja] \in Y$ is equal to $D$, this broadcasting can be completed in at most $D+3$ time steps. Therefore, $b(\Ga, [0]) \le D+3$. Moreover, if there is only one vertex $[u]$ in $Y$ such that $d([0], [u]) = D$ and further $[u]$ is of the form $[r + ja]$ for some $j \ge 0$, then $L$ requires only $D+2$ time steps and hence $b(\Ga) = D+2$. This occurs when, for example, $\Ga = TL_{43}(7, 6, 1) \cong EJ_{7-6\rho}$. Thus the lower bound in (\ref{eq:bounds}) is attainable. 

We now prove (\ref{eq:ub}) for $\Ga$ with $n = 12g^2+1 \ge 49$ and $a = 6g^2 + 3g + 1$ (see Example \ref{ex:6g}). As mentioned in Example \ref{ex:mix}, this special graph $\Ga$ satisfies $i_j = 2g-j$ ($0 \le j \le g-1$), $i_j = 2g-j-1$ ($g \le j \le 2g-1$) and $i_{2g} = 0$. Hence its diameter $D = 2g$. It suffices to prove $b(\Ga, [0]) \ge 2g+3$. 

Suppose to the contrary that there exists a broadcasting scheme for $\Ga$ using $2g+2$ time steps. By Lemma \ref{lem:tel}(a), we have $d([0], [((2g-j)+ja)a^k]) = 2g$ for $0 \le j \le g-1$ and $0 \le k \le 5$. In particular, $d([0], [2ga^k]) = 2g$ and $P_k: [0], [a^k], [2a^k], \ldots, [2ga^k]$ is the unique shortest path from $[0]$ to $[2ga^k]$. Among the six vertices of $H$, exactly one receives $M$ at time 1. If at most one vertex of $H$ receives $M$ at time 2, then at least one vertex in $H$, say, $[a^k]$, receives $M$ at time 4. Since $P_k$ has length $2g$, this implies that $[2ga^k]$ receives $M$ at time $2g+3$ or later, which contradicts our assumption. Thus exactly two vertices of $H$ receive $M$ at time 2, and the remaining three vertices receive $M$ at time 3 or later. However, if a vertex in $H$, say, $[1]$, receives $M$ at time 4 or later, then since $P_0$ is the unique path from $[0]$ to $[2g]$, $[2g]$ receives $M$ at time $2g+3$ or later, which is a contradiction. Therefore the times that the vertices of $H$ (in cyclic order) receive $M$ must be $(1, 2, 2, 3, 3, 3)$, $(1, 2, 3, 2, 3, 3)$, $(1, 2, 3, 3, 2, 3)$ or $(1, 2, 3, 3, 3, 2)$. In each case there are two consecutive vertices of $H$ (with respect to the cyclic order) which receive $M$ at time 3. Without loss of generality we may assume that $[1]$ and $[a]$ receive $M$ at time $3$. Since the broadcasting finishes in $2g+2$ time steps, and since $P_0$ is the unique shortest path from $[0]$ to $[2g]$ and its length is $2g$, each vertex $[i]$ on $P_0$ has to receive $M$ from $[i-1]$ at time $i+2$, $2 \le i \le 2g$. Similarly, each vertex $[ia]$ on $P_1$ receives $M$ from $[(i-1)a]$ at time $i+2$, $2 \le i \le 2g$. Since $d([0], [(2g-1)+a]) = d$, $[(2g-1)+a]$ has to receive $M$ via a shortest path $Q$ from $[0]$ to $[2g-1]$. Note that $Q$ uses either $[1]$ or $[a]$ as its second vertex. In the former case, $Q$ is of the form $[0], [1], \ldots, [s], [s+a], [s+a+1], \ldots, [(2g-1)+a]$ for some $1 \le s \le 2g-1$. However, since $[s]$ sends $M$ to $[s+1]$ at time $s+3$, it cannot send $M$ to $[s+a]$ at time $s+3$. Thus $[s+a]$ receives $M$ from $[s]$ at time $s+4$ or later. Consequently, $[s+a+1]$ receives $M$ from $[s+a]$ at time $s+5$ or later, and so on. Finally, $[(2g-1)+a]$ receives $M$ from $[(2g-2)+a]$ at time $2g+3$ or later, contradicting our assumption. In the case when $Q$ uses $[a]$, it is the path $[0], [a], [a+1], \ldots, [a+(2g-1)]$. Since $[a]$ sends $M$ to $[2a]$ at time $4$, it cannot send $M$ to $[a+1]$ at time 4. Hence $[a+1]$ receives $M$ from $[a]$ at time $5$ or later, and so on, and $[a+(2g-1)]$ receives $M$ at time $2g+3$ or later, which is again a contradiction. Therefore, $b(\Ga [0]) \ge 2g+3$ and (\ref{eq:ub}) holds for the graph in Example \ref{ex:6g}.
\qed \end{proof}

We notice that for HARTS $H_k$ (see Example \ref{optimal}) an optimal broadcasting algorithm was given in \cite[Algorithm A2]{CSK}, which uses $k+2$ steps if $k \ge 3$ and $k+1 = 3$ steps if $k = 2$. Since $H_k$ is a 6-valent first-kind Frobenius circulant of diameter $k-1$ (Example \ref{optimal}), this result is now a special case of Theorem \ref{thm:broad}.

\section{Concluding remarks}

It would be interesting to find whether non-Frobenius EJ graphs are as efficient as 6-valent first-kind Frobenius circulants in terms of routing, gossiping and broadcasting, and whether they also have the smallest possible forwarding indices and gossip time. 

In Example \ref{optimal} we saw that, for any integer $k \ge 2$, $TL_{n_k} (3k+2, 3k+1, 1)$ is a 6-valent first-kind Frobenius circulant with $n_k = 3k^2+3k+1$ vertices. When $k=2$ this graph is the HARTS network physically tested at the University of Michigan as a distributed real-time computing system. Note that $n_k$ can be a composite number (e.g.~$n_5 = 91 = 7 \cdot 13$), and in this case by (e) of Theorem \ref{existence}, besides $TL_{n_k} (3k+2, 3k+1, 1)$ there is at least one more 6-valent first-kind Frobenius circulant of order $n_k$. It would be interesting to explicitly construct all of them.

Finally, one may investigate various combinatorial properties of 6-valent first-kind Frobenius circulants and EJ graphs in general. The results in \cite{MNR} imply that the chromatic number of any 6-valent first-kind Frobenius circulant $\Ga$ of order $n \equiv 1~\mod 6$ is equal to $4$ if $n \ne 7, 13, 19$, and $7, 5, 5$ respectively in these exceptional cases. Thus, if $n > 19$, then the independence number of $\Ga$ is at least $\lceil n/4 \rceil$. At present we do not know whether this bound is sharp in general.

\bigskip
\noindent {\bf Acknowledgements}~~We thank the anonymous referees for bringing \cite{ABF, Shin} to our attention and appreciate Alex Ghitza for helpful discussions on Eisenstein-Jacobi integers. Zhou was supported by a Future Fellowship (FT110100629) and a Discovery Project Grant (DP120101081) of the Australian Research Council, as well as a Shanghai Leading Academic Discipline Project (No. S30104).

{\small

}

\end{document}